\numberwithin{equation}{section}
\theoremstyle{plain}
\newtheorem{thm}{Theorem}[section] 
\newtheorem{prop}[thm]{Proposition}
\newtheorem{cor}[thm]{Corollary}
\newtheorem{lem}[thm]{Lemma}
\newtheorem{theorem*}{Theorem}[]
\theoremstyle{definition}
\newtheorem{defn}[thm]{Definition}
\theoremstyle{remark}
\newtheorem{rmk}[thm]{Remark}
\theoremstyle{property}
\newcommand{\N}{\mathbb{N}}
\newcommand{\R}{\mathbb{R}}
\DeclareMathOperator{\grad}{grad\,}
\def\accentclass@{7}
\def\makeacc@#1#2{\def#1{\mathaccent"\accentclass@#2 }}
\makeacc@\cir{017}
\newcommand{\s}{\Sigma}
\def\dis{\displaystyle}
\def\inf{\mathop{\rm inf}}
\def\det{\mathop{\rm det}}
\def\min{\mathop{\rm min}}
\newcommand{\de}{\delta}
\newcommand{\sing}{\mathrm{Sing}}
\def\det{{\text {\rm det}}}
\def\a {{\alpha}}
\def\d {{\delta}}
\def\D {{\Delta}}
\title[relative Kuo condition]
{On the relative Kuo condition \\
and the second relative Kuo condition}
\author{Karim Bekka and Satoshi Koike} 
\address{Institut de recherche Mathematique de Rennes, 
Universit\'{e} de Rennes 1, Campus Beaulieu, 35042 Rennes cedex, France}
\address{Department of Mathematics, Hyogo University of Teacher Education,
Kato, Hyogo 673-1494, Japan}
\email{karim.bekka@univ-rennes1.fr}
\email{koike@hyogo-u.ac.jp} 
\subjclass[2010]{Primary 57R45 Secondary 58K40}
\keywords{relative Kuo condition, $\s$-$V$-sufficiency of jet, 
Kuo distance, Rabier's function}
\date{\today}
\begin{document}

\thanks{This research is partially supported by the Grant-in-Aid 
for Scientific Research (No. 26287011) of Ministry of Education, 
Science and Culture of Japan, and HUTE Short-Term Fellowship
Program 2016.}


\maketitle

\begin{abstract}
The Kuo condition and the the second Kuo condition are known as criteria for an 
$r$-jet to be $V$-sufficient in $C^r$ mappings and $C^{r+1}$ mappings, 
respectively.
In \cite{bekkakoike2} we considered the notions of 
$V$-sufficiency of jets and these Kuo conditions in the relative case 
to a given closed set $\s$, and showed that the relative Kuo condition 
is a criterion for a relative $r$-jet to be $\s$-$V$-sufficient in $C^{r}$ 
mappings and the second relative Kuo condition is a sufficient condition for 
a relative $r$-jet to be $\s$-$V$-sufficient in $C^{r+1}$ mappings. 
In this paper we discuss several conditions equivalent to the relative Kuo 
condition or the second relative Kuo condition.
\end{abstract}

\bigskip

\section{Introduction}
Tzee-Char Kuo formulated in \cite{kuo3} necessary and sufficient conditions, 
called the Kuo condition and the second Kuo condition, for an $r$-jet to be 
$V$-sufficient in $C^r$ mappings and $C^{r+1}$ mappings, respectively.
Sufficiency of jets is a notion introduced by Ren\'e Thom related to 
the structural stability problem.
$V$-sufficiency is sufficiency on the topology of the zero-set 
of a mapping.

Let $\s$ be a germ of a closed subset of $\R^n$ at $0 \in \R^n$ 
such that $0 \in \s$. 
In \cite{bekkakoike2} we generalised the notions of $V$-sufficiency of jets 
and the Kuo conditions to the relative case to $\s$ . 
Then we proved that the relative Kuo condition is a criterion for a relative 
$r$-jet to be $\s$-$V$-sufficient in $C^r$ mappings (see \S \ref{relativekuo}),
and that the second relative Kuo condition is a sufficient condition 
for a relative $r$-jet to be $\s$-$V$-sufficient in $C^{r+1}$ mappings 
(see \S \ref{2ndrelativekuo}). 

In \S \ref{mainresults} we show the main results of this paper on equivalent 
conditions to the above two relative Kuo conditions. 
In \S \ref{equivrel} we show a result on conditions equivalent to 
the relative Kuo condition (Theorem \ref{equiv1}).
In \S \ref{equiv2ndrel} we show two kinds of results on conditions equivalent 
to the second relative Kuo condition (Theorems \ref{prop223}, \ref{prop226}).

We give the definition of $\s$-$V$-sufficiency of jet, namely $V$-sufficiency 
of the relative jet to $\s$ in \S \ref{Preliminaries}, and the definitions of 
the relative Kuo condition and the second relative Kuo condition in 
\S \ref{criteria}.


Throughout this paper, let us denote by $\mathbb{N}$ the set of natural 
numbers in the sense of positive integers.

\bigskip


\section{Preliminaries}\label{Preliminaries}

   
Let $s \in \N \cup \{ \infty , \omega\}$. 
Let ${\mathcal E}_{[s]}(n,p)$ denote the set of
$C^s$ map-germs : $(\R^n,0)\to (\R^p,0)$, let $j^r f(0)$ denote the r-jet of 
$f$ at $0 \in \R^n$ for $f \in {\mathcal E}_{[s]}(n,p)$, $s \ge r$,
and let $J^r(n,p)$ denote the set of r-jets in ${\mathcal E}_{[s]}(n,p)$.

Throughout this paper, let $\s$ denote a germ of a closed subset of $ \R^n$ 
at $0 \in \R^n$ such that $0 \in \s.$  
Then we denote by $d(x,\s)$ the distance from a point $x \in \R^n$ 
to the subset $\s.$
 
We consider on ${\mathcal E}_{[s]}(n,p)$ the following equivalence relation: 

\vspace{1mm}

\noindent Two map-germs  $f,g\,\in {\mathcal E}_{[s]}(n,p)$ are 
$r$-$\Sigma$-{\em equivalent}, denoted by $f\sim g$, if there exists 
a neighbourhood $U$ of $0$ in $\R^n$ such that the r-jet extensions of 
$f$ and $g$ satisfy $j^rf(\s\cap U)= j^rg(\s\cap U).$

\vspace{1mm}

\noindent We denote by $j^rf(\s;0)$ the equivalence 
class of $f$, and by $J^r_{\s}(n,p)$ the quotient set 
${\mathcal E}_{[s]}(n,p)/\sim.$ 

\begin{rmk}\label{realisation}
Any $r$-jet, $r \in \N$, has a unique polynomial realisation 
of degree not exceeding $r$ in the non-relative case.
But relative-jets do not always have a $C^{\omega}$ realisation.
See Remark 2.1 in \cite{bekkakoike2} for the details.
\end{rmk}

Let us introduce an equivalence for elements of ${\mathcal E}_{[s]}(n,p)$.
We say that  $f,g\,\in {\mathcal E}_{[s]}(n,p)$ are 
$\s$-V-{\em equivalent}, if $f^{-1}(0)$ is homeomorphic to $g^{-1}(0)$
as germs at $0\in \mathbb{R}^n$ by a homeomorphism which fixes 
$f^{-1}(0)\cap \s$.

Let $w \in J^r_{\s}(n,p).$ 
We call the relative jet $w$ $\s$-V-{\em sufficient} in 
${\mathcal E}_{[s]}(n,p)$, $s \geq r$, if any two realisations
$f$, $g\,\in {\mathcal E}_{[s]}(n,p)$ of $w,$ 
namely $j^rf(\s;0) = j^rg(\s;0)=w,$  are $\s$-V-equivalent.

We next prepare notations concerning large and small relation and 
equivalence between two non-negative functions.

\begin{defn}
Let  $f,g : U \to \R$ be non-negative functions,
where $U \subset \R^N$ is an open neighbourhood of $0 \in \R^N$.
If there are real numbers $K > 0$, $\delta > 0$ 
with $B_{\delta}(0) \subset U$ such that
$$
f(x ) \le K g(x ) \ \ \text{for any} \ \ x \in B_{\delta}(0),
$$
where $B_{\delta}(0)$ is a closed ball in $\R^N$ of radius $\delta$
centred at $0 \in \R^N$, 
then we write $f \precsim g$ (or $g \succsim f$).
If $f \precsim g$ and $f \succsim g$, we write $f \thickapprox g$.
\end{defn}

At the end of this section, let us recall a useful lemma to treat 
the $C^{r+1}$ case.

\begin{lem}\label{lemrflat}(\cite{bekkakoike2})
Let $r \in \N$, and let $\s$ be a germ at $0\in \mathbb{R}^n$ of 
a closed set. 
Let $f: ( \mathbb{R}^n,0)\to (\mathbb{R}^p,0)$ be a $C^r$ map-germ, 
$r\geq 1,$ such that $j^rf(\s;0)=\{0\}.$ 
Then
$
\|f(x)\|=o(d(x,\s)^r).
$
If  moreover $ f$ is of classe $ C^{r+1}$, then 
$
\|f(x)\| \precsim d(x,\s)^{r+1}.
$
\end{lem}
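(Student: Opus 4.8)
The plan is to reduce everything to Taylor's theorem applied at the nearest point of $\s$, where the hypothesis supplies the vanishing of all low-order derivatives. Since $\s$ is closed, for each $x$ in a small ball about $0$ I would choose $\bar x\in\s$ realising the distance, so that $\|x-\bar x\|=d(x,\s)$. Because $0\in\s$ we have $d(x,\s)\le\|x\|$, whence $\|\bar x\|\le\|x\|+d(x,\s)\le 2\|x\|$; thus $\bar x$ remains inside the neighbourhood $U$ on which $j^rf(\s;0)=\{0\}$ forces $D^\alpha f(\bar x)=0$ for every multi-index with $|\alpha|\le r$ (in particular $f(\bar x)=0$).

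For the first assertion I would expand $f$ about $\bar x$ by Taylor's formula with integral remainder of order $r$, which requires only $f\in C^r$:
\[
f(x)=\sum_{|\alpha|\le r-1}\frac{D^\alpha f(\bar x)}{\alpha!}(x-\bar x)^\alpha + r\sum_{|\alpha|=r}\frac{(x-\bar x)^\alpha}{\alpha!}\int_0^1(1-t)^{r-1}D^\alpha f\bigl(\bar x+t(x-\bar x)\bigr)\,dt.
\]
The polynomial part vanishes since $D^\alpha f(\bar x)=0$ for all $|\alpha|\le r-1$, leaving $f(x)$ equal to the remainder alone. To estimate it, I would use that each order-$r$ derivative $D^\alpha f$ is continuous and vanishes on $\s$: every point $y=\bar x+t(x-\bar x)$ of the segment satisfies $\|y-\bar x\|\le d(x,\s)$ with $\bar x\in\s$, so by uniform continuity on a compact neighbourhood of $0$ one gets $\|D^\alpha f(y)\|=\|D^\alpha f(y)-D^\alpha f(\bar x)\|\le\omega\bigl(d(x,\s)\bigr)$, where $\omega$ is the common modulus of continuity of the order-$r$ derivatives and $\omega(\rho)\to0$ as $\rho\to0$. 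Combined with $\|x-\bar x\|=d(x,\s)$ and the multinomial bound $\sum_{|\alpha|=r}|x-\bar x|^\alpha/\alpha!\precsim d(x,\s)^r$, this yields $\|f(x)\|\le C\,\omega\bigl(d(x,\s)\bigr)\,d(x,\s)^r$; as $d(x,\s)\to0$ when $x\to0$, the factor $\omega(d(x,\s))\to0$, giving $\|f(x)\|=o(d(x,\s)^r)$.

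For the sharper $C^{r+1}$ bound I would run the same argument one order higher. Now the degree-$r$ Taylor polynomial vanishes at $\bar x$, while the remainder involves the order-$(r+1)$ derivatives, which are merely continuous and hence bounded by some constant $M$ on a compact neighbourhood of $0$. The identical estimate, with $\omega(d(x,\s))$ replaced by the constant $M$ and with $r$ replaced by $r+1$ throughout, gives $\|f(x)\|\le C'M\,d(x,\s)^{r+1}$, i.e.\ $\|f(x)\|\precsim d(x,\s)^{r+1}$.

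The main obstacle is the $C^r$ statement, not the $C^{r+1}$ one. Because the segment $[\bar x,x]$ leaves $\s$, the order-$r$ derivatives appearing in the remainder are not identically zero along it, so their vanishing on $\s$ cannot be invoked directly. The decisive observation is that the whole segment stays within distance $d(x,\s)$ of the single point $\bar x\in\s$, so uniform continuity upgrades ``vanishing at $\bar x$'' to the uniform smallness $\omega(d(x,\s))$ — and it is exactly this little-$o$ gain, rather than a bare $O$, that the first assertion demands. In the $C^{r+1}$ case the top-order derivatives need only be bounded, and no such subtlety arises.
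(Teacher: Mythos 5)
Your proof is correct, and it follows the standard argument (nearest-point projection onto $\s$ plus Taylor expansion with integral remainder at that point), which is exactly how this lemma is established in the cited reference \cite{bekkakoike2}; the present paper itself only quotes the result. The one point worth making explicit—that the little-$o$ gain in the $C^r$ case comes from uniform continuity of the order-$r$ derivatives vanishing at $\bar x$, while the $C^{r+1}$ case only needs boundedness of the order-$(r+1)$ derivatives—is precisely the point you identified and handled.
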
 

\bigskip

\section{Criteria for $\s$-$V$-sufficiency of jets}\label{criteria}

\subsection{The relative Kuo condition}\label{relativekuo}
We suppose now on the germ $\s$ fixed, and introduce the relative 
notion to $\s$ of the Kuo condition.
A criterion for an $r$-jet to be $C^0$-sufficient or $V$-sufficient 
in $C^r$ functions (resp. in $C^{r+1}$ functions) is known as 
the Kuiper-Kuo condition (resp. the second Kuiper-Kuo condition) 
in the non-relative, function case (see N. Kuiper \cite{kuiper}, T.-C. Kuo 
\cite{kuo1} and J. Bochnak - S. Lojasiewicz \cite{bochnaklojasiewicz}).
In this case the Kuiper-Kuo condition is equivalent to the Kuo condition 
(\cite{bochnaklojasiewicz}). 

Let $v_1, \cdots , v_p$ be $p$ vectors in $\R^n$ where $n \ge p$.
The {\em Kuo distance $\kappa$} (\cite{kuo3}) is defined by
$\dis
\kappa(v_1, \ldots,v_p) = \displaystyle \min_{i}\{\text{distance of }\, 
v_i\, \text{ to }\, V_i\},
$
where $V_i$ is the span of the $ v_j$'s, $j\ne i$.
In the case where $p = 1$, $\kappa (v) = \| v \| .$

We first recall the notion of the relative Kuo condition.
The original condition was introduced by T.-C. Kuo \cite{kuo3}
as a criterion of $V$-sufficiency of jets in the mapping case.

\begin{defn}[The relative  Kuo condition]\label{K}  
A map germ $f\in {\mathcal E}_{[r]}(n,p)$, $n\geq p$, satisfies the 
{\it relative  Kuo condition $(K_{\s})$} if there are strictly positive 
numbers $C, \alpha$ and $ \bar w$ such that
\begin{equation*}
\kappa(df(x))\geq Cd(x,\s)^{r-1} \text{ in } 
{\mathcal H}^{\s}_{r}(f; \bar w)\cap \{\| x \| < \alpha\},
\end{equation*} 
 $$\text{ namely, } \kappa(df(.))\succsim d(.,\s)^{r-1} \text{ on a set of points where }  \| f \| \precsim \ d(.,\s)^{r}.$$
\end{defn} 

In the definition \ref{K}, ${\mathcal H}^{\s}_{r}(f;\bar w)$ denotes the 
{\em horn-neighbourhood of $f^{-1}(0)$ relative to $\s$ of degree $r$ and 
width $\bar{w}$},
$$
{\mathcal H}^{\s}_{r}(f;\bar w) := \{x\in \mathbb{R}^n: \| f(x) \|
\leq\bar w\ d(x,\s)^{r}\}.
$$
The horn-neighbourhood was originally introduced in \cite{kuo2} 
in the non-relative case.

By definition, it is easy to see that the relative Kuo condition ($K_{\s}$) 
is equivalent to the following condition. 

\begin{defn}[Condition ($\widetilde{K}_{\s}$)]
A map germ $f\in {\mathcal E}_{[r]}(n,p)$, $n\geq p$, satisfies
{\it condition} ($\widetilde{K}_{\s}$) if 
\begin{equation*}
d(x,\s)\kappa(df(x))+\|f(x)\|\succsim d(x,\s)^{r} 
\end{equation*} 
holds in some neighbourhood of $0 \in \R^n.$
\end{defn}

\begin{rmk}\label{remark210}
\begin{enumerate}[1)]
\item Condition ($\widetilde{K}_{\s}$) was introduced in \cite{bekkakoike1}, 
in the non-relative case, namely $\s=\{0\}$.
\item The relative Kuo condition $(K_{\s})$ and condition 
($\widetilde{K}_{\s}$) are invariant under rotation.
\end{enumerate}
\end{rmk} 

As a sufficient condition for an $r$-jet to be $\s$-$V$-sufficient 
in ${\mathcal E}_{[r]}(n,p)$, we have the following result.

\begin{thm}\label{RelativeKuoThm}(\cite{bekkakoike2}) 
Let $r \in \N$, and let
$f \in {\mathcal E}_{[r]}(n,p)$, $n \ge p$.
If $f$ satisfies condition $(\widetilde{K}_{\s})$,
then the relative $r$-jet, $j^r f(\s;0)$, is $\s$-$V$-sufficient 
in ${\mathcal E}_{[r]}(n,p)$.
\end{thm}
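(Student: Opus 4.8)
The plan is to prove that condition $(\widetilde{K}_{\s})$ implies $\s$-$V$-sufficiency by constructing an explicit homeomorphism between the zero-sets of two realisations, following the classical Kuo–Thom strategy adapted to the relative setting. Let $f, g \in {\mathcal E}_{[r]}(n,p)$ be two realisations of the same relative jet $w = j^rf(\s;0) = j^rg(\s;0)$, so that $f - g$ is $r$-flat along $\s$; by Lemma \ref{lemrflat}, $\|f(x) - g(x)\| = o(d(x,\s)^r)$. The goal is a germ of a homeomorphism $h \colon (\R^n,0) \to (\R^n,0)$ carrying $f^{-1}(0)$ onto $g^{-1}(0)$ and fixing $f^{-1}(0) \cap \s$. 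I would build $h$ as the time-one map of the flow of a carefully chosen vector field that interpolates between $f$ and $g$.

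**First I would** set up the homotopy $F(x,t) = (1-t)f(x) + t\,g(x)$ for $t \in [0,1]$ and look for a vector field $\vecv(x,t)$ on $\R^n$ (depending on the parameter $t$) whose flow $\varphi_t$ sweeps out the desired isotopy, i.e. so that $F(\varphi_t(x), t)$ stays constant in $t$ for $x$ near the common zero locus. Differentiating the relation $F(\varphi_t(x),t) = \text{const}$ yields the transport equation $d_xF \cdot \vecv + \partial_t F = 0$, where $\partial_t F = g - f$. The standard device is to take $\vecv$ to be a specific solution of this linear system, chosen orthogonal to the fibres, of the form $\vecv = -(d_xF)^{\dagger}(g-f)$ using a generalised inverse built from the rows $df_i$ (or rather $d_xF_i$). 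The key quantitative input is that the smallest singular behaviour of $d_xF$ is controlled from below by the Kuo distance $\kappa(d_xF(x))$, which on the relevant region is comparable to $\kappa(df(x))$ since $f$ and $g$ agree to order $r$.

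**The hard part will be** the estimate showing that this vector field is integrable near $0$ with the flow remaining well-defined and continuous up to the boundary $\s$, and that trajectories do not escape the horn-neighbourhood. Here is where condition $(\widetilde{K}_{\s})$ enters decisively: one must bound $\|\vecv(x,t)\|$, which behaves like $\|g(x)-f(x)\| / \kappa(d_xF(x))$. On the horn-neighbourhood ${\mathcal H}^{\s}_{r}$ where $\|f\| \precsim d(x,\s)^r$, condition $(\widetilde{K}_{\s})$ gives $d(x,\s)\,\kappa(df(x)) \succsim d(x,\s)^r$, hence $\kappa(df(x)) \succsim d(x,\s)^{r-1}$; combined with the flatness $\|g-f\| = o(d(x,\s)^r)$ from Lemma \ref{lemrflat}, this forces $\|\vecv(x,t)\| = o(d(x,\s))$. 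Such a bound is exactly what makes the distance to $\s$ an (almost) Lyapunov function for the flow, so trajectories are trapped in shrinking neighbourhoods of $\s$, the flow extends continuously by the identity on $\s$, and integration over the finite time interval $[0,1]$ converges. Outside the horn-neighbourhood, where $\|f\|$ and $\|g\|$ are bounded below by $d(x,\s)^r$, one verifies directly that $f^{-1}(0)$ and $g^{-1}(0)$ meet only inside the horn, so the flow need only be modified away from the zero-sets by a standard cut-off argument.

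**To finish**, I would assemble these pieces: use a partition of unity to patch the vector field built inside the horn-neighbourhood with a trivial field away from the zero-sets, verify that the resulting global field is continuous with the decay estimate $\|\vecv\| = o(d(x,\s))$ preserved, integrate to obtain the isotopy $\varphi_t$, and set $h = \varphi_1$. The continuity of $h$ at points of $\s$ follows from the trapping estimate, and $h$ fixes $f^{-1}(0) \cap \s$ because the field vanishes there. The map $h$ is then the required $\s$-$V$-equivalence, which establishes the $\s$-$V$-sufficiency of the relative jet $w$.
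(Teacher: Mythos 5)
The paper itself does not prove Theorem \ref{RelativeKuoThm}; it is quoted from \cite{bekkakoike2}, so your attempt can only be compared with the Kuo-type vector-field argument carried out in that reference. Your plan follows exactly that classical route, and the quantitative skeleton is right: $\|f-g\|=o(d(x,\s)^r)$ from Lemma \ref{lemrflat}, $\kappa(df(x))\succsim d(x,\s)^{r-1}$ on a horn-neighbourhood of sufficiently small width (this is just $(K_\s)\iff(\widetilde K_\s)$), and hence $\|\vecv\|=o(d(x,\s))$ for the minimal-norm solution of the transport equation, which gives the trapping of trajectories near $\s$ and the continuous extension by the identity. One step you assert but do not justify is the comparability of $\kappa(d_xF(x,t))$ with $\kappa(df(x))$ on the horn: this needs the companion estimate $\|d(f-g)(x)\|=o(d(x,\s)^{r-1})$, obtained by applying Lemma \ref{lemrflat} to the first-order partials of $f-g$ (whose relative $(r-1)$-jet vanishes), combined with the inequality $\nu(T+T')\geq \nu(T)-\|T'\|$ and $\nu\thickapprox\kappa$; this is fixable with the paper's stated tools.

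The genuine gap is the phrase ``integrate to obtain the isotopy $\varphi_t$, and set $h=\varphi_1$.'' The field $\vecv=-(d_xF)^{\dagger}\,\partial_tF$ is only continuous: off $\s$ it is $C^{r-1}$ (built from first derivatives of $C^r$ maps and the inverse of their Gram matrix), and after extension by $0$ along $\s$ it is merely continuous. A continuous field has solutions by Peano, but not a flow; without uniqueness of integral curves the ``time-one map'' is not even well defined, let alone a homeomorphism with continuous inverse. You have the means to repair this partially: uniqueness through points of $\s$ follows by Gronwall from $\|\vecv\|=o(d(x,\s))$ (any solution starting on $\s$ satisfies $d(x(t),\s)\equiv 0$, hence is constant), and off $\s$ the field is locally Lipschitz whenever $r\geq 2$, since then it is $C^{r-1}$ with $r-1\geq 1$. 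But the theorem allows $r=1$, where $\vecv$ is merely continuous off $\s$ and this argument collapses; that case requires a separate device (this is precisely the delicate point a complete proof must confront, not a detail). A minor further remark: the partition-of-unity patching in your last step is unnecessary and slightly circular, since horn-invariance of trajectories is derived from $F$ being constant along them; because $\|g-f\|=o(d(x,\s)^r)$, every slice $F_t^{-1}(0)$ already lies in the horn-neighbourhood near $0$, and since $\s$-$V$-equivalence only asks for a homeomorphism between the zero sets fixing $f^{-1}(0)\cap\s$, the field never needs to be defined, nor modified, outside the horn.
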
  

On the other hand, we have the following criteria for an $r$-jet to be 
$\s$-$V$-sufficient in ${\mathcal E}_{[r]}(n,p)$.

\begin{thm}\label{RelativeKuoThm1}(\cite{bekkakoike2}) 
Let $r \in \N$, and let 
$f \in {\mathcal E}_{[r]}(n,p)$, $n > p$. 
Then the following conditions are equivalent.
\begin{enumerate}[(1)]
\item $f$ satisfies the relative Kuo condition $(K_{\s})$.
\item $f$ satisfies condition $(\widetilde{K}_{\s})$.
\item The relative $r$-jet $j^r f(\s;0)$ is $\s$-$V$-sufficient 
in ${\mathcal E}_{[r]}(n,p)$.
\end{enumerate}
\end{thm}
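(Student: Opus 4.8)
The plan is to establish the cycle $(1)\Rightarrow(2)\Rightarrow(3)\Rightarrow(1)$. The implication $(2)\Rightarrow(3)$ is exactly Theorem \ref{RelativeKuoThm}, available from \cite{bekkakoike2}, so it may be quoted directly; note it holds already for $n\ge p$. The genuine work is therefore the elementary equivalence $(1)\Leftrightarrow(2)$ together with the necessity $(3)\Rightarrow(1)$, which I would prove in its contrapositive form $\neg(1)\Rightarrow\neg(3)$, mirroring Kuo's non-relative argument in \cite{kuo3}; this is where the strict inequality $n>p$ enters.

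For $(1)\Leftrightarrow(2)$ I would argue by a dichotomy on membership in the horn-neighbourhood. For $(1)\Rightarrow(2)$, fix the width $\bar w$ from $(K_{\s})$; given $x$ near $0$, either $x\notin{\mathcal H}^{\s}_{r}(f;\bar w)$, so that $\|f(x)\|>\bar w\,d(x,\s)^{r}$ and hence $\|f(x)\|\succsim d(x,\s)^{r}$ outright, or $x\in{\mathcal H}^{\s}_{r}(f;\bar w)$, in which case $(K_{\s})$ yields $\kappa(df(x))\succsim d(x,\s)^{r-1}$ and so $d(x,\s)\kappa(df(x))\succsim d(x,\s)^{r}$. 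In either case $d(x,\s)\kappa(df(x))+\|f(x)\|\succsim d(x,\s)^{r}$, which is $(\widetilde{K}_{\s})$. For $(2)\Rightarrow(1)$, write $(\widetilde{K}_{\s})$ as $d(x,\s)\kappa(df(x))+\|f(x)\|\ge A\,d(x,\s)^{r}$ near $0$ and choose a horn of width $\bar w<A$; on ${\mathcal H}^{\s}_{r}(f;\bar w)$ the term $\|f(x)\|\le\bar w\,d(x,\s)^{r}$ is absorbed, leaving $d(x,\s)\kappa(df(x))\ge(A-\bar w)\,d(x,\s)^{r}$, i.e.\ $(K_{\s})$ with $C=A-\bar w$.

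For the necessity $\neg(1)\Rightarrow\neg(3)$, I would assume $(K_{\s})$ fails and produce two realisations of $j^{r}f(\s;0)$ that are not $\s$-$V$-equivalent. Negating $(K_{\s})$ for the parameter triples $(1/k,1/k,1/k)$ yields a sequence $x_{k}\to0$, necessarily off $\s$ (else the defect inequality would force a negative Kuo distance), with
\[
\|f(x_{k})\|=o(d(x_{k},\s)^{r})\quad\text{and}\quad\kappa(df(x_{k}))=o(d(x_{k},\s)^{r-1}).
\]
At such points the gradients $\nabla f_{1},\dots,\nabla f_{p}$ are nearly Kuo-degenerate: some $\nabla f_{i}$ lies within $o(d(x_{k},\s)^{r-1})$ of the span of the others. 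The plan is to exploit this to build a perturbation $\phi$ with $j^{r}\phi(\s;0)=0$ — so that $g:=f+\phi$ is again a realisation of $j^{r}f(\s;0)$ — supported in shrinking balls of radius comparable to $\delta_{k}\,d(x_{k},\s)$ about the $x_{k}$. By Lemma \ref{lemrflat} the relative $r$-flatness forces $\|\phi\|=o(d(\cdot,\s)^{r})$, so $\phi$ stays within the jet class; yet, because both $\|f(x_{k})\|$ and the Kuo defect are $o$ of the corresponding powers, $\phi$ can be chosen at an intermediate scale $\varepsilon_{k}\,d(x_{k},\s)^{r}$ with $\|f(x_{k})\|/d(x_{k},\s)^{r}\ll\varepsilon_{k}\to0$, so that it dominates $f$ near $x_{k}$ and alters the local topology of the zero-set — for instance creating an extra component of $g^{-1}(0)$, or changing the fibre type — in a way that $f^{-1}(0)$ does not exhibit.

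The hard part is precisely this construction, where two constraints must be met at once: $\phi$ must have vanishing relative $r$-jet (tight flatness estimates keyed to the horn membership and to the distance-to-$\s$ weights), while still effecting a robust topological modification. Here the hypothesis $n>p$ is decisive: it makes the fibres positive-dimensional, leaving transverse room for $\phi$ to act, which is why $(3)\Rightarrow(1)$ requires the strict inequality even though the sufficiency $(2)\Rightarrow(3)$ holds for $n\ge p$. One must also arrange the modification to sit in a neighbourhood of $x_{k}$ disjoint from $\s$, so that the resulting discrepancy between $g^{-1}(0)$ and $f^{-1}(0)$ is a genuine obstruction: no homeomorphism of germs fixing $f^{-1}(0)\cap\s$ can carry one zero-set to the other, and hence $\s$-$V$-sufficiency fails.
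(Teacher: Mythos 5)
Your treatment of $(1)\Leftrightarrow(2)$ is correct: the dichotomy on horn membership (absorb $\|f\|$ inside ${\mathcal H}^{\s}_{r}(f;\bar w)$, use the complement of the horn otherwise) is exactly the "easy to see by definition" equivalence the paper alludes to after Definition \ref{K}. Quoting Theorem \ref{RelativeKuoThm} for $(2)\Rightarrow(3)$ is also legitimate; note in fact that this paper never proves Theorem \ref{RelativeKuoThm1} itself but imports it wholesale from \cite{bekkakoike2}, so there is no in-paper proof to match step by step.

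The genuine gap is $(3)\Rightarrow(1)$. What you give there is a plan, not a proof. The extraction of the sequence $x_k\to 0$, $x_k\notin\s$, with $\|f(x_k)\|=o(d(x_k,\s)^{r})$ and $\kappa(df(x_k))=o(d(x_k,\s)^{r-1})$ is fine, but everything after that is deferred in your own words ("the plan is to exploit this\dots", "the hard part is precisely this construction"), and that deferred part \emph{is} the theorem: it is the relative analogue of Kuo's necessity argument in \cite{kuo3}. To close it one must (i) upgrade the pointwise degeneracy at $x_k$ to control of $f$ on a whole ball $B(x_k,\delta_k\, d(x_k,\s))$ via Taylor estimates, since dominating $f$ only at the point $x_k$ says nothing about the zero set nearby; (ii) write down the perturbation $\phi$ explicitly (a bump function times a suitable form in the nearly-dependent gradient direction) with derivative bounds that make $g=f+\phi$ a genuine $C^{r}$ germ — this is delicate precisely because the supports accumulate at $0\in\s$, so $C^{r}$-regularity at the origin needs the scale estimates, not merely disjointness of the supports from $\s$; and (iii) actually prove that $f^{-1}(0)$ and $g^{-1}(0)$ are not homeomorphic by a germ homeomorphism fixing $f^{-1}(0)\cap\s$, which is where $n>p$ (positive-dimensional fibres) must be exploited concretely, e.g.\ by creating or destroying a component of the zero set near the $x_k$. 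None of (i)--(iii) is carried out. There is also a small logical slip: Lemma \ref{lemrflat} does not "force" $\phi$ to stay within the jet class; the implication runs the other way (vanishing relative $r$-jet implies $\|\phi\|=o(d(\cdot,\s)^{r})$), so membership of $g$ among the realisations of $j^{r}f(\s;0)$ must be secured by the construction, with the lemma serving only as a consistency constraint on the admissible scale of $\phi$.
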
  

\begin{thm}\label{RelativeKuoThm2}(\cite{bekkakoike2}) 
Let $r \in \N$, and let 
$f \in {\mathcal E}_{[r]}(n,n)$.
Suppose that $j^rf(\s ;0)$ has a subanalytic $C^r$-realisation
and that $\s$ is a subanalytic closed subset of $\R^n$ 
such that $0 \in \s$. 
Then the following conditions are equivalent.
\begin{enumerate}[(1)]
\item $f$ satisfies the relative Kuo condition $(K_{\s})$.
\item $f$ satisfies condition $(\widetilde{K}_{\s})$.
\item The relative $r$-jet $j^r f(\s;0)$ is $\s$-$V$-sufficient 
in ${\mathcal E}_{[r]}(n,n)$.
\end{enumerate}
\end{thm}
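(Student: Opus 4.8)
The plan is to prove the theorem as a chain of implications, concentrating on the single new ingredient. The equivalence $(1)\Leftrightarrow(2)$ is the definitional observation recorded in the excerpt just before the statement of $(\widetilde K_\s)$, and I would make it precise as follows: on the horn-neighbourhood $\mathcal H^\s_r(f;\bar w)$ one has $\|f\|\precsim d(\cdot,\s)^r$, so for $\bar w$ small the term $\|f(x)\|$ in $(\widetilde K_\s)$ is a negligible fraction of $d(x,\s)^r$ and can be absorbed into the constant, turning $d(x,\s)\kappa(df(x))\succsim d(x,\s)^r$ into $\kappa(df(x))\succsim d(x,\s)^{r-1}$; conversely, off the horn $\|f(x)\|\succsim d(x,\s)^r$ already forces $(\widetilde K_\s)$, while on the horn $(K_\s)$ supplies the bound $d(x,\s)\kappa(df(x))\succsim d(x,\s)^r$. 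The implication $(2)\Rightarrow(3)$ is exactly Theorem \ref{RelativeKuoThm}, which applies here because $n=p$ satisfies $n\ge p$ and needs no subanalyticity. Thus all the force of the statement, and the reason for the extra hypotheses, lies in the necessity $(3)\Rightarrow(1)$, which I would establish by contraposition.

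Assume $(K_\s)$ fails: for every $C,\alpha,\bar w>0$ there is $x$ with $\|x\|<\alpha$, $\|f(x)\|\le\bar w\,d(x,\s)^r$ and $\kappa(df(x))<C\,d(x,\s)^{r-1}$. Since $\s$ is subanalytic and the given $C^r$-realisation of $j^rf(\s;0)$ is subanalytic, the three functions $\kappa(df(\cdot))$, $\|f(\cdot)\|$ and $d(\cdot,\s)$ are subanalytic, so the subanalytic curve selection lemma produces a subanalytic arc $\gamma:[0,\varepsilon)\to\R^n$ with $\gamma(0)=0$, lying in $\mathcal H^\s_r(f;\bar w)$, along which $\kappa(df(\gamma(t)))/d(\gamma(t),\s)^{r-1}\to 0$. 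A \L{}ojasiewicz inequality then upgrades this to comparable powers of $t$, say $d(\gamma(t),\s)\thickapprox t^a$ and $\kappa(df(\gamma(t)))=\smallo(t^{a(r-1)})$; here it is convenient to compare $\kappa$ with Rabier's function, a second subanalytic gauge for the distance of $df$ to the non-surjective maps, which makes these estimates uniform and furnishes a subanalytic unit covector $\phi(t)$ with $\|df(\gamma(t))^*\phi(t)\|\thickapprox\kappa(df(\gamma(t)))$.

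From this degeneracy I would manufacture two realisations of $w=j^rf(\s;0)$ that fail to be $\s$-$V$-equivalent. The near-vanishing of $\|df(\gamma(t))^*\phi(t)\|$ says the rows $\grad f_1,\dots,\grad f_n$ of $df$ are quantitatively almost dependent along $\gamma$, so I perturb by $g=f+\psi$ with $\psi(x)=\lambda\,\theta(x)\,v$, where $v\in\R^n$ is the target direction dual to $\phi$ and $\theta$ is a subanalytic cut-off concentrated on a shrinking tube about $\gamma$ and vanishing to order $r$ along $\s$. By Lemma \ref{lemrflat} one gets $j^r\psi(\s;0)=0$, so $g$ realises the same relative jet $w$. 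The point of the \L{}ojasiewicz bound is that $\psi$, though $r$-$\s$-flat, is still large compared with the rate at which $f$ can move values transverse to its image along $\gamma$; in the equidimensional case $n=p$ a local-degree/index computation then shows that $g$ acquires (or loses) a zero on the tube where $f^{-1}(0)$ has none, producing a germ-level change in the zero-set that no homeomorphism fixing the intersection with $\s$ can undo. This contradicts the $\s$-$V$-sufficiency of $w$.

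The step I expect to be the main obstacle is exactly this last construction: keeping $\psi$ flat enough along $\s$ to preserve the relative $r$-jet while forcing a topologically essential change in $f^{-1}(0)$ away from $\s$. The two demands pull against each other---flatness on $\s$ wants $\psi$ small, the topological change wants $\psi$ not too small---and reconciling them is precisely what the subanalytic curve selection lemma and the \L{}ojasiewicz estimates along $\gamma$ make possible. The equidimensional hypothesis $n=p$ is what lets a degree argument convert the analytic near-degeneracy of $df$ into an honest topological discrepancy, which is why this case is separated from the $n>p$ case of Theorem \ref{RelativeKuoThm1} and why subanalyticity of both $\s$ and the realisation is assumed.
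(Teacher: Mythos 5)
Your proposal cannot be checked against a proof in this paper, because the paper does not prove Theorem \ref{RelativeKuoThm2}: it is quoted from \cite{bekkakoike2}, so the benchmark is the Kuo-style argument given there. Your handling of $(1)\Leftrightarrow(2)$ and of $(2)\Rightarrow(3)$ (citing Theorem \ref{RelativeKuoThm}) is correct, and you correctly locate all the content in $(3)\Rightarrow(1)$. That step, however, has a genuine gap.

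The gap is the final comparison: the claim that $g=f+\psi$ ``acquires (or loses) a zero on the tube where $f^{-1}(0)$ has none, producing a germ-level change in the zero-set that no homeomorphism fixing the intersection with $\Sigma$ can undo.'' For $n=p$ this is false as stated. The zeros you create are isolated points accumulating at $0$; if $f^{-1}(0)\setminus\Sigma$ itself already contains a sequence of isolated points accumulating at $0$ (a $C^\infty$ map $\R^2\to\R^2$ can have exactly that, e.g.\ built from $e^{-1/x^2}\sin(1/x)$), then $f^{-1}(0)$ and $f^{-1}(0)\cup\{\text{new points}\}$ \emph{are} homeomorphic as germs by a homeomorphism fixing the $\Sigma$-part: match the two discrete sequences. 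So the perturbation by itself proves nothing; one must first control the topology of $f^{-1}(0)$, and that --- not curve selection or Lojasiewicz estimates --- is the essential role of the subanalyticity hypotheses. (Indeed Theorem \ref{RelativeKuoThm1}, the case $n>p$, needs no subanalyticity, and the constructions can be run along a mere sequence $x_k\to 0$; an arc is a convenience, not the crux.) For a subanalytic realisation, $f^{-1}(0)$ has finitely many connected components near $0$, so as a germ $f^{-1}(0)\setminus\Sigma$ is either empty or contains arcs, and the proof must split along this dichotomy. In the empty case the conclusion is easy and needs no degree theory: failure of $(K_{\Sigma})$ for \emph{every} $\bar w$ gives points $x_k$ with $\|f(x_k)\|\le\frac1k\,d(x_k,\Sigma)^r$, the bump perturbation $\psi=\sum_k(-f(x_k))\,\theta_k$ with $\theta_k$ supported in $B\bigl(x_k,\tfrac12 d(x_k,\Sigma)\bigr)$ is $r$-flat relative to $\Sigma$ and makes each $x_k$ a zero of $f+\psi$, and then any germ homeomorphism $h\colon (f+\psi)^{-1}(0)\to f^{-1}(0)\subseteq\Sigma$ fixing the common $\Sigma$-part would send $x_k\notin\Sigma$ to a point of $f^{-1}(0)\cap\Sigma$ that $h$ already fixes, contradicting injectivity.

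In the other case your construction breaks down for a second reason: if $f$ vanishes on an arc off $\Sigma$, then (since $n=p$) $df$ is singular along that arc, so $\kappa(df)\equiv 0$ there and $(K_{\Sigma})$ fails automatically; curve selection may return precisely such an arc $\gamma\subset f^{-1}(0)$, and then ``the tube where $f^{-1}(0)$ has none'' does not exist. Here one must do the opposite and harder thing --- build a realisation whose zero set omits or alters these arcs --- and this is where the quantitative degeneracy of $df$, the flatness budget of Lemma \ref{lemrflat}, and equidimensionality genuinely interact; your proposal contains no construction for this case, and your degree argument does not substitute for one, since degree detects existence of zeros in a ball, not a change of the homeomorphism type of the germ rel $\Sigma$. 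A smaller point: the hypothesis is that \emph{some} $C^r$-realisation of $j^rf(\Sigma;0)$ is subanalytic, not that $f$ is; before arguing as if $f$ were subanalytic you must check that $(K_{\Sigma})$ and $(\widetilde K_{\Sigma})$ are independent of the realisation, which follows by applying Lemma \ref{lemrflat} to the partial derivatives of the difference of two realisations together with the Lipschitz-type stability of $\kappa$ (via Rabier's $\nu$).
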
  

For the subanalyticity, see H. Hironaka \cite{hironaka}.


\subsection{The second relative Kuo condition}\label{2ndrelativekuo}

We next recall the notion of the second relative Kuo condition.
The original condition was introduced also by T.-C. Kuo \cite{kuo3} 
as a criterion of $V$-sufficiency of $r$-jets in $C^{r+1}$ mappings.

\begin{defn}[The second relative Kuo condition]\label{Kd}  
A map germ $f\in {\mathcal E}_{[r+1]}(n,p)$, $n\geq p$, satisfies the 
{\it second relative  Kuo condition $(K_{\s}^\de)$} if for any 
map $g\in {\mathcal E}_{[r+1]}(n,p)$ 
satisfying $j^{r}g(\s;0)=j^{r}f(\s;0)$ there are strictly positive numbers
$C, \alpha,\delta$ and $ \bar w$ (depending on $g$), such
that
\begin{equation*}
\kappa(df(x))\geq Cd(x,\s)^{r-\delta} \text{ in }  \mathcal{H}^{\s}_{r+1}(g;\bar w)\cap\{\|x\|<\a\},
\end{equation*} 
namely, $\kappa(df(.))\succsim d(.,\s)^{r-\delta} $
on a set of points where $\| g(.) \| \precsim \ d(.,\s)^{r+1}.$
\end{defn} 

\begin{rmk}\label{rmk2ndRelativeKuo}
\begin{enumerate}[1)]
\item For a  map $f\in {\mathcal E}_{[r]}(n,p)$ satisfying the  relative  Kuo condition or the second relative  Kuo condition, in a neighbourhood of $0 \in \R^n,$ the intersection of the singular set of $f,$ $\sing (f),$ and the horn 
neighbourhood
${\mathcal H}^{\s}_{r}(f; \bar w)$ is contained in $\s$, namely 
$$
\sing(f)\cap {\mathcal H}^{\s}_{r}(f; \bar w)\subset \s.
$$
In particular, in a neighbourhood of $0 \in \R^n$,  
$\mathrm{grad}f_{1}(x),\ldots,\mathrm{grad}f_{p}(x) $ are linearly independent  on $f^{-1}(0)\setminus \s.$ 

\item  For a map $f\in {\mathcal E}_{[r]}(n,p)$ satisfying the second relative 
Kuo, we have for any map 
$g\in {\mathcal E}_{[r+1]}(n,p)$ satisfying $j^{r}g(\s;0)=j^{r}f(\s;0)$, in a 
neighbourhood of $0 \in \R^n,$ the intersection of the singular set of $f,$ 
$\sing (f),$ and the horn neighbourhood
${\mathcal H}^{\s}_{r+1}(g; \bar w)$ is contained in $\s$, namely 
$
\sing(f)\cap {\mathcal H}^{\s}_{r+1}(g; \bar w)\subset \s.
$
Since   $\|(f-g)(x)\|\precsim d(x,\s)^{r+1},$ we have $f^{-1}(0)\subset {\mathcal H}^{\s}_{r+1}(g; \bar w) ,$
then, in a neighbourhood of $0 \in \R^n$,  
$\mathrm{grad}f_{1}(x),\ldots,\mathrm{grad}f_{p}(x) $ are linearly independent  on $f^{-1}(0)\setminus \s.$ 

\end{enumerate}

\end{rmk} 

We next consider a condition of type condition ($\widetilde{K}_{\s}$) 
in the $C^{r+1}$ case.

\begin{defn}[Condition ($\widetilde{K}^\d_{\s}$)]
A map germ $f\in {\mathcal E}_{[r+1]}(n,p)$, $n\geq p$, satisfies
{\it condition ($\widetilde{K}_{\s}^\d$})
if for any 
map $g\in {\mathcal E}_{[r+1]}(n,p)$ 
satisfying $j^{r}g(\s;0)=j^{r}f(\s;0)$ there exists 
$\delta>0$  (depending on $g$), such
that
\begin{equation*}
d(x,\s)\kappa(df(x))+\|g(x)\|\succsim d(x,\s)^{r+1-\d} 
\end{equation*} 
holds in some neighbourhood of $0 \in \R^n.$

\end{defn}

\begin{rmk}\label{remark211}
\begin{enumerate}[(1)]
\item The second relative Kuo condition ($K^\d_{\s}$), and condition 
($\widetilde{K}^\d_{\s}$) are invariant under rotation.
\item Condition ($\widetilde{K}^\d_{\s}$) can be equivalently written as:
for any map $g\in {\mathcal E}_{[r+1]}(n,p)$ satisfying 
$j^{r}g(\s;0)=j^{r}f(\s;0)$ there exists $\delta>0$  (depending on $g$), 
such
that
\begin{equation*}
d(x,\s)\kappa(dg(x))+\|g(x)\|\succsim d(x,\s)^{r+1-\d} 
\end{equation*} 
holds in some neighbourhood of $0 \in \R^n.$

\end{enumerate}

\end{rmk}

As a sufficient condition for an $r$-jet to be $\s$-$V$-sufficient 
in ${\mathcal E}_{[r+1]}(n,p)$, we have the following result.

\begin{thm}\label{RelativeKuoThm3}(\cite{bekkakoike2})
Let $r$ be a positive integer, and let
$f \in {\mathcal E}_{[r+1]}(n,p)$, $n \ge p$.
If $f$ satisfies condition $(K^\d_{\s})$,
then the relative $r$-jet, $j^r f(\s;0)$ is $\s$-$V$-sufficient 
in ${\mathcal E}_{[r+1]}(n,p)$.
\end{thm}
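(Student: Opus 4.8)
The plan is to prove $\s$-$V$-sufficiency by the standard Kuo trivialisation. Since $\s$-$V$-equivalence is an equivalence relation and $f$ is itself a realisation of $w=j^rf(\s;0)$, it suffices to show that an arbitrary realisation $g\in{\mathcal E}_{[r+1]}(n,p)$ of $w$ is $\s$-$V$-equivalent to $f$. First I would form the affine interpolation $F(x,t)=(1-t)f(x)+tg(x)$, $t\in[0,1]$. Because $j^rg(\s;0)=j^rf(\s;0)$, the difference $g-f$ has vanishing relative $r$-jet, so every $F_t:=F(\cdot,t)$ is again a realisation of $w$, and $\partial F/\partial t=g-f$. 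The goal is to build a continuous vector field $\vecv(x,t)$ whose flow $\Phi_t$ preserves the zero sets $F_t^{-1}(0)$, so that $\Phi_1$ carries $f^{-1}(0)$ onto $g^{-1}(0)$.

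The vector field is the one keeping $F$ constant along trajectories: I would take $\vecv(x,t)$ to be the minimal-norm solution of $d_xF(x,t)\,\vecv=f(x)-g(x)$, i.e. $\vecv=(d_xF)^{\dagger}(f-g)$ via the Moore--Penrose inverse, which exists wherever $d_xF$ has rank $p$. Then $\frac{d}{dt}F(\xi(t),t)=d_xF\cdot\vecv+\partial F/\partial t=0$ along any integral curve $\xi$, so zeros are transported as desired. The two quantitative inputs are: (i) by Lemma \ref{lemrflat} applied to $g-f$ one has $\|f-g\|\precsim d(\cdot,\s)^{r+1}$, and applied componentwise to its first-order partial derivatives, $\|d(g-f)\|\precsim d(\cdot,\s)^{r}$; and (ii) condition $(K^\d_{\s})$, for this $g$, gives $\kappa(df)\succsim d(\cdot,\s)^{r-\de}$ on ${\mathcal H}^{\s}_{r+1}(g;\bar w)$. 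Since the horn-neighbourhoods of $f$, $g$ and each $F_t$ differ only by terms $\precsim d(\cdot,\s)^{r+1}$, they are mutually comparable; and since the Kuo distance $\kappa$ is comparable (up to a constant depending only on $p$) to the least singular value of the matrix of gradients, which is $1$-Lipschitz, (i) and (ii) upgrade to $\kappa(d_xF_t)\succsim d(\cdot,\s)^{r-\de}$ throughout such a horn (the term $d(\cdot,\s)^{r}$ is absorbed because $\de>0$). Combining, the key estimate is
\[
\|\vecv(x,t)\|\ \le\ \frac{\|f-g\|}{\kappa(d_xF_t)}\ \precsim\ \frac{d(x,\s)^{r+1}}{d(x,\s)^{r-\de}}\ =\ d(x,\s)^{1+\de},
\]
with exponent $1+\de>1$. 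To globalise, I would multiply $\vecv$ by a cut-off equal to $1$ on a half-width horn and supported in ${\mathcal H}^{\s}_{r+1}(g;\bar w)$; this changes nothing on $\bigcup_tF_t^{-1}(0)$ (which lies in every horn), keeps the bound $\precsim d(x,\s)^{1+\de}$, and makes $\vecv$ a continuous field on a neighbourhood of $0$ that vanishes on $\s$. Integrating yields flows $\Phi_t$, and $h:=\Phi_1$ is the candidate germ homeomorphism with $h(f^{-1}(0))=g^{-1}(0)$. Because $\vecv\equiv0$ on $\s$, the flow fixes $\s$ pointwise, hence fixes the common set $f^{-1}(0)\cap\s=g^{-1}(0)\cap\s$ (these are equal since $f=g$ on $\s$).

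The main obstacle is verifying that $h$ is genuinely a homeomorphism of germs, continuous up to and along $\s$. This is exactly where the exponent $1+\de>1$ is used: writing $\varphi(t)=d(\xi(t),\s)$ and using that $d(\cdot,\s)$ is $1$-Lipschitz gives the differential inequality $|\varphi'(t)|\le\|\vecv\|\precsim\varphi(t)^{1+\de}$, whence $|\frac{d}{dt}\varphi^{-\de}|\precsim1$. Thus $\varphi(t)^{-\de}$ stays within a bounded additive range of $\varphi(0)^{-\de}$, so trajectories starting off $\s$ never reach $\s$, trajectories starting on $\s$ stay fixed, and $d(\xi(t),\s)\to0$ uniformly as $d(\xi(0),\s)\to0$. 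Away from $\s$ the field $\vecv$ is $C^{r}$ (hence locally Lipschitz) wherever $d_xF$ has full rank, giving unique forward and backward flows; the distance estimate then provides continuity of $h$ and of $h^{-1}=\Phi_1^{-1}$ across $\s$, completing the trivialisation. By Remark \ref{remark211}(2) one may freely replace $\kappa(df)$ by $\kappa(d_xF_t)$ throughout.
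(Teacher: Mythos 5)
Your route is the expected one---this paper quotes Theorem \ref{RelativeKuoThm3} from \cite{bekkakoike2} without reproducing a proof, and the standard argument is precisely the Kuo vector-field trivialisation of the affine homotopy $F_t=(1-t)f+tg$ that you set up. Most of your quantitative steps are sound: $\|f-g\|\precsim d(\cdot,\s)^{r+1}$ and $\|d(f-g)\|\precsim d(\cdot,\s)^{r}$ from Lemma \ref{lemrflat}, the transfer from $\kappa(df)$ to $\kappa(d_xF_t)$ via $\nu(T+T')\geq \nu(T)-\|T'\|$ and $\nu\thickapprox\kappa$, the bound $\|\vecv\|\precsim d(\cdot,\s)^{1+\de}$, and the Gronwall-type inequality on $\varphi(t)^{-\de}$ that keeps trajectories off $\s$ and gives continuity across $\s$. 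The genuine gap is the sentence claiming the horn-neighbourhoods of $f$, $g$ and $F_t$ are ``mutually comparable'' so that $\bigcup_t F_t^{-1}(0)$ ``lies in every horn''. Condition $(K_\s^\de)$ applied to your single $g$ gives $\kappa(df(x))\geq C d(x,\s)^{r-\de}$ only on ${\mathcal H}^{\s}_{r+1}(g;\bar w)$ for \emph{some} width $\bar w=\bar w(g)>0$, which may be very small; whereas all you know about the zero sets is that on $F_t^{-1}(0)$ one has $\|g(x)\|=(1-t)\|(g-f)(x)\|\leq C_0\, d(x,\s)^{r+1}$, where $C_0$ is the uncontrolled constant produced by Lemma \ref{lemrflat}. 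Nothing forces $C_0\leq\bar w$. If parts of $F_t^{-1}(0)$ leave the narrow horn, your cut-off destroys the argument: there $\frac{d}{dt}F(\xi(t),t)=(1-\chi)(g-f)\neq 0$, so $\Phi_1$ does not carry $f^{-1}(0)$ into $g^{-1}(0)$. That this width issue is not pedantic is visible in the paper itself: upgrading the ``some width'' condition $(K_\s^\de)$ to the width-free inequality $(\widetilde K_\s^\de)$ is exactly the content of Theorem \ref{prop223}, and it costs a subanalyticity hypothesis plus the \L ojasiewicz inequality, which you do not have here.

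The repair is elementary but is a missing idea in your write-up: use the full strength of the hypothesis, which quantifies over \emph{all} $C^{r+1}$ realisations, and apply it to every member of your own homotopy. Each $F_s$, $s\in[0,1]$, is a $C^{r+1}$ realisation of $j^rf(\s;0)$, so $(K_\s^\de)$ gives constants $C_s,\de_s,\bar w_s,\a_s$ with $\kappa(df)\geq C_s d(\cdot,\s)^{r-\de_s}$ on ${\mathcal H}^{\s}_{r+1}(F_s;\bar w_s)\cap\{\|x\|<\a_s\}$. Since $\|F_s-F_t\|=|s-t|\,\|g-f\|\leq |s-t|\,C_0\, d(\cdot,\s)^{r+1}$, for $|t-s|\leq \bar w_s/(2C_0)$ the set $\{\|F_t\|\leq (\bar w_s/2)\, d(\cdot,\s)^{r+1}\}$ is contained in ${\mathcal H}^{\s}_{r+1}(F_s;\bar w_s)$. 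The intervals $\{|t-s|<\bar w_s/(2C_0)\}$ cover the compact interval $[0,1]$; a finite subcover and taking minima of the constants (note $d^{\,r-\de_{s_i}}\geq d^{\,r-\de}$ for $d\leq 1$ when $\de:=\min_i\de_{s_i}$) yield a single $\de>0$, $C>0$, $\epsilon>0$, $\a>0$ such that $\kappa(df(x))\geq C d(x,\s)^{r-\de}$ on the moving horn $\{\|F_t(x)\|\leq \epsilon\, d(x,\s)^{r+1},\ \|x\|<\a\}$ for every $t\in[0,1]$, a region that contains each $F_t^{-1}(0)$ with margin $\epsilon$. With this uniform estimate in place of your comparability claim, the cut-off can be taken adapted to the moving horn, and the remainder of your proof (ODE, injectivity, continuity up to $\s$, fixing of $f^{-1}(0)\cap\s$) goes through as written.
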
 

As a corollary of Theorem \ref{RelativeKuoThm3} and Lemma \ref{lemrflat}, 
we have the following.

\begin{cor}\label{RelativeKuoCor2}(\cite{bekkakoike2})
Let $r$ be a positive integer, and let $f \in {\mathcal E}_{[r+1]}(n,p)$,
$n \ge p$.
If there exists $\delta>0$ such that
\begin{equation*}
d(x,\s)\kappa(df(x))+\|f(x)\|\succsim d(x,\s)^{r+1-\d}
\end{equation*} 
holds in some neighbourhood of $0 \in \R^n,$
then $j^rf(\s;0)$ is $\s$-$V$-sufficient in ${\mathcal E}_{[r+1]}(n,p).$
\end{cor}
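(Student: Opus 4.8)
The goal is Corollary \ref{RelativeKuoCor2}, which should follow from Theorem \ref{RelativeKuoThm3} once I verify that the hypothesis
\[
d(x,\s)\kappa(df(x))+\|f(x)\|\succsim d(x,\s)^{r+1-\d}
\]
forces $f$ to satisfy condition $(K^\d_{\s})$. Let me plan the logical route.

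The plan is to show that the displayed inequality for $f$ implies condition $(\widetilde{K}^\d_{\s})$, and then invoke Theorem \ref{RelativeKuoThm3} together with the equivalence that $(K^\d_{\s})$ and $(\widetilde{K}^\d_{\s})$ amount to the same thing (which is the second-condition analogue of the easy equivalence recorded for $(K_{\s})$ and $(\widetilde{K}_{\s})$). Concretely, condition $(\widetilde{K}^\d_{\s})$ requires that for \emph{every} competitor $g \in {\mathcal E}_{[r+1]}(n,p)$ with $j^r g(\s;0)=j^r f(\s;0)$ there is some $\delta'>0$ with $d(x,\s)\kappa(df(x))+\|g(x)\|\succsim d(x,\s)^{r+1-\d'}$ near $0$. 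So the main task is to pass from the fixed-$f$ estimate to the same estimate with $f$ replaced by an arbitrary such $g$.

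The key step is the comparison between $\|f\|$ and $\|g\|$ supplied by Lemma \ref{lemrflat}. Since $j^r(f-g)(\s;0)=\{0\}$ and $f-g$ is of class $C^{r+1}$, the lemma gives $\|f(x)-g(x)\| \precsim d(x,\s)^{r+1}$. First I would fix $\delta := \min\{1/2,\ \d\}$ or, more cleanly, just take $\delta$ as the exponent defect in the hypothesis and observe that for any $x$ in a small enough ball one has, by the triangle inequality,
\[
\|f(x)\| \le \|g(x)\| + \|f(x)-g(x)\| \precsim \|g(x)\| + d(x,\s)^{r+1}.
\]
Plugging this into the hypothesis yields
\[
d(x,\s)^{r+1-\d} \precsim d(x,\s)\kappa(df(x)) + \|g(x)\| + d(x,\s)^{r+1}.
\]
Since $r+1 > r+1-\d$, the term $d(x,\s)^{r+1}$ is dominated by $d(x,\s)^{r+1-\d}$ near $0$ (indeed $d(x,\s)^{r+1}=d(x,\s)^{\d}\cdot d(x,\s)^{r+1-\d}\precsim d(x,\s)^{r+1-\d}$), so it can be absorbed into the left-hand side. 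After moving that term across and shrinking the ball if necessary, one obtains $d(x,\s)\kappa(df(x))+\|g(x)\|\succsim d(x,\s)^{r+1-\d}$ near $0$, with the \emph{same} $\delta$ working for every $g$. Hence $f$ satisfies $(\widetilde{K}^\d_{\s})$, equivalently $(K^\d_{\s})$, and Theorem \ref{RelativeKuoThm3} delivers the $\s$-$V$-sufficiency of $j^r f(\s;0)$ in ${\mathcal E}_{[r+1]}(n,p)$.

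The step I expect to require the most care is the absorption of the error term: one must be sure that the implicit constant in $\|f-g\|\precsim d(\cdot,\s)^{r+1}$ and the constant in the hypothesis can be combined on a common ball $B_\delta(0)$, and that the inequality $d(x,\s)^{r+1}\precsim d(x,\s)^{r+1-\d}$ holds there because $d(x,\s)$ is bounded (it is, since $d(x,\s)\le \|x\|$ as $0\in\s$). Everything else is the routine triangle-inequality manipulation, and the uniformity of $\delta$ over all competitors $g$ comes for free precisely because the Lemma \ref{lemrflat} bound has the fixed exponent $r+1$ regardless of $g$.
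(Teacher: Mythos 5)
Your proof is correct and is essentially the paper's own argument: the paper obtains this corollary precisely by combining Theorem \ref{RelativeKuoThm3} with Lemma \ref{lemrflat}, exactly as you do (the lemma gives $\|f-g\|\precsim d(x,\s)^{r+1}$, the error term is absorbed because $r+1>r+1-\d$ and $d(x,\s)$ is small near the origin, and the sufficiency theorem is then applied). The only imprecision is the phrase ``equivalently $(K_{\s}^\d)$'': the full equivalence of $(\widetilde{K}_{\s}^\d)$ and $(K_{\s}^\d)$ is established in the paper only for subanalytic data (Theorem \ref{prop223}), but the direction you actually use, namely that $(\widetilde{K}_{\s}^\d)$ implies $(K_{\s}^\d)$, holds without any subanalyticity assumption (Remark \ref{nonsubanalytic}), so your argument stands as written.
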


\bigskip


\section{Main results}\label{mainresults}

\subsection{Conditions equivalent to the relative Kuo condition}\label{equivrel}

We first recall Rabier's function.
Let $ \mathcal{L}(E, F)$ denote the  space of linear  mappings from 
$\R^n$ to $\R^p$. 
For $T\in  \mathcal{L}(\R^n, \R^p),$  we denote by $\ T^*$ the adjoint map in 
$ \mathcal{L}(\R^p, \R^n).$

\begin{defn}
Let $T\in \mathcal{L}(\R^n, \R^p)$. Set
$$
\nu(T)=\displaystyle \inf\{\Vert T^{*}(v)\Vert:\ v\in \R^p,\ 
\Vert v\Vert=1\} ,
$$
where $T^*$ is the dual operator.
This function is called {\em Rabier's function} (\cite{R}).
\end{defn}
We have the following facts on Rabier's function $\nu(T)$
(see \cite{KOS}, \cite{R} and \cite{Jelonek} for instance):

\begin{enumerate}  
\item  Let $\mathcal{S}$ be the set of non surjective linear map in 
$ \mathcal{L}(\R^n,\R^p)$. 
Then for all $T\in \mathcal{L}(\R^n,\R^p)$, we have:
\begin{enumerate}  
\item $\dis
\nu(T)= d(T, \mathcal{S}):=\inf_{T'\in \mathcal{S}}\Vert T-T'\Vert.
$
\item 
$\dis
\nu(T)=\sup \{ r>0:  \, \  B(0,r)\subseteq T(B(0,r))\} .
$
\item If $T\in GL_{n}(\R)$, then 
$\dis \nu(T)=\frac{1}{\Vert T^{-1}\Vert}$.
\end{enumerate}  
\item  For $T,\ T'\in \mathcal{L}(\R^n,\R^p)$ we have  $ \nu(T+T')\geq \nu(T)-\Vert T'\Vert$.
\item  The relationship $\nu \thickapprox \kappa$ holds
between the Rabier's function and the Kuo distance.
More precisely, for $T=(T_{1}, \ldots,T_{p})\in \mathcal{L}(\R^n,\R^p)$, 
we have 
$$
\nu(T)\leq \kappa(T)\leq \sqrt{p}\ \nu({T}).
$$
\end{enumerate} 

\begin{defn}\label{minor}  Let $A=[a_{ij}]$ be the matrix in 
$\mathcal{M}_{n,p}(\R),\ n\geq p$. 
By $M_{I}(A)$,  we denote a $p\times p$ minor of $A$ indexed by $I$, 
where $I= (i_{1},\ \ldots,\ i_{p})$ is any subsequence of 
$(1, \ldots,\ n)$. 
Moreover, if $J= (j_{1},\ \ldots,\ j_{p-1})$ is any subsequence of $(1, \ldots, n)$ and $j\in\{1,\ \ldots,\ p\}$, then by $M_{J}(j)(A)$ we denote 
an $(p-1)\times(p-1)$ minor of a matrix given by columns indexed by $J$ 
and with deleted $j$-th row (if $p=1$ we put $M_{J}(j)(A)=1$). 
We define $\eta : \mathcal{M}_{n,p}(\R) \to \R_+$ by
$$
\eta (A) := \displaystyle \left(\frac{\sum_{I}\vert M_{I}(A)\vert^2}
{\sum_{J,j}\vert M_{J}(j)(A)\vert^2}\right)^{\frac{1}{2}}. 
$$
\end{defn} 

It is easy to see that the above $\eta$ is equivalent to
the following non-negative function $\tilde{\eta}$ in the sense that
$\eta \thickapprox \tilde{\eta}$:
$$
\tilde \eta (A):=\displaystyle \max_{I}\frac{|M_{I}(A)|}{h_{I}(A)},
$$
where $h_{I}(A)=\displaystyle \max\{|M_{J}(j)(A)|:J\subset I,
\ j=1,\ \ldots,\ p\}$,
with the  convention that   ${0\over 0}=0$.

\begin{lem}\label{equiv2}
The relationship $\eta \thickapprox \kappa$ holds.
More precisely, for $T=(T_{1}, \ldots,T_{p})\in \mathcal{L}(\R^n,\R^p)$, 
we have 
$$
\eta (T)\leq \kappa(T)\leq \sqrt{p}\ \eta ({T}).
$$
\end{lem}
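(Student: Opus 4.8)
The plan is to reinterpret both $\eta(T)$ and $\kappa(T)$ as ratios of Gram volumes, and then to reduce the claimed two-sided estimate to the elementary comparison between the $\ell^2$-norm and the $\ell^\infty$-norm on $\R^p$. Identify $T=(T_1,\dots,T_p)$ with the matrix $A\in\mathcal{M}_{n,p}(\R)$ whose $i$-th column is $v_i:=T_i\in\R^n$, so that $V_i=\mathrm{span}\{v_k:k\neq i\}$ is exactly the subspace appearing in the definition of $\kappa$; here $M_J(j)(A)$ is read as the $(p-1)\times(p-1)$ minor of $A$ using the $p-1$ rows indexed by $J$ and omitting the $j$-th column. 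By the Cauchy--Binet formula, $\sum_I|M_I(A)|^2=\det(A^\top A)$ is the square of the $p$-dimensional volume $\mathrm{vol}(v_1,\dots,v_p)$ of the parallelepiped spanned by $v_1,\dots,v_p$; and, for each fixed $j$, writing $\hat A_j$ for $A$ with its $j$-th column deleted, $\sum_J|M_J(j)(A)|^2=\det(\hat A_j^\top\hat A_j)$ is the square of the $(p-1)$-volume $\mathrm{vol}_j:=\mathrm{vol}(\{v_k:k\neq j\})$. Hence
\[
\eta(T)=\frac{\mathrm{vol}(v_1,\dots,v_p)}{\bigl(\sum_{j=1}^p\mathrm{vol}_j^{\,2}\bigr)^{1/2}}.
\]

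Next I would record the classical base--height identity $\mathrm{vol}(v_1,\dots,v_p)=\mathrm{vol}_j\cdot\dist(v_j,V_j)$, which expresses the $p$-volume as the $(p-1)$-volume of a face times the corresponding height. In the nondegenerate case, where $v_1,\dots,v_p$ are linearly independent (so that $\mathrm{vol}(v_1,\dots,v_p)>0$ and every $\mathrm{vol}_j>0$), this gives $\dist(v_j,V_j)=\mathrm{vol}(v_1,\dots,v_p)/\mathrm{vol}_j$, and therefore
\[
\kappa(T)=\min_j\dist(v_j,V_j)=\frac{\mathrm{vol}(v_1,\dots,v_p)}{\max_j\mathrm{vol}_j}.
\]

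With these two formulas the lemma follows from the standard inequalities $\max_j a_j\le(\sum_{j=1}^p a_j^{\,2})^{1/2}\le\sqrt p\,\max_j a_j$, applied to $a_j=\mathrm{vol}_j\ge 0$. Taking reciprocals and multiplying through by $\mathrm{vol}(v_1,\dots,v_p)>0$ turns these into $\eta(T)\le\kappa(T)\le\sqrt p\,\eta(T)$, which is the assertion. The degenerate case is immediate: if $v_1,\dots,v_p$ are linearly dependent then $\mathrm{vol}(v_1,\dots,v_p)=0$, so $\eta(T)=0$ by the convention $\tfrac{0}{0}=0$, while some $v_j$ lies in $V_j$ and hence $\kappa(T)=0$ too, so all three quantities vanish.

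I expect the only genuine content to sit in the first two paragraphs: recognizing the numerator and the denominators of $\eta$ as Gram determinants through Cauchy--Binet, and invoking the base--height formula to rewrite $\kappa$ as a single volume ratio. Everything afterwards is the one-line $\ell^\infty$--$\ell^2$ comparison, and it is precisely the $p$ summands in the denominator of $\eta$ that produce the factor $\sqrt p$, in exact parallel with the estimate $\nu(T)\le\kappa(T)\le\sqrt p\,\nu(T)$ for Rabier's function recorded above. The only points demanding care are the bookkeeping in the degenerate case and fixing the row/column convention in $M_J(j)(A)$ so that the two Cauchy--Binet identities apply verbatim.
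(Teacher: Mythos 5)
Your proof is correct, and there is in fact nothing in the paper to compare it against: the authors state Lemma \ref{equiv2} without proof, so your argument supplies the missing details. The two identifications you make are exactly the right ones. By Cauchy--Binet, $\sum_I|M_I(A)|^2=\det(A^\top A)=\Gamma(v_1,\dots,v_p)=\mathrm{vol}(v_1,\dots,v_p)^2$ and, for each fixed $j$, $\sum_J|M_J(j)(A)|^2=\Gamma((v_k)_{k\neq j})=\mathrm{vol}_j^2$, so $\eta(T)=\mathrm{vol}(v_1,\dots,v_p)\bigl(\sum_{j=1}^p\mathrm{vol}_j^2\bigr)^{-1/2}$; the Gram base--height identity $\Gamma(v_1,\dots,v_p)=\Gamma((v_k)_{k\neq j})\,\dist(v_j,V_j)^2$ then gives $\kappa(T)=\mathrm{vol}(v_1,\dots,v_p)/\max_j\mathrm{vol}_j$ in the nondegenerate case, and the stated two-sided bound is precisely $\max_j a_j\le\bigl(\sum_j a_j^2\bigr)^{1/2}\le\sqrt{p}\,\max_j a_j$ applied to $a_j=\mathrm{vol}_j$, with the factor $\sqrt{p}$ arising from the $p$ summands exactly as you say. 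Only two points of bookkeeping deserve a line each. First, in the degenerate case the convention $0/0=0$ is needed only when $\mathrm{rank}\,A\le p-2$; when the rank is exactly $p-1$ the denominator of $\eta$ is positive and $\eta(T)=0$ outright --- your conclusion holds either way, but the paper formally attaches the convention to $\tilde\eta$ rather than to $\eta$, so it is worth separating the subcases. Second, the case $p=1$ sits slightly outside the volume formalism and should be noted: there $\kappa(v)=\|v\|$, and by the paper's convention $M_J(j)(A)=1$ the denominator of $\eta$ equals $1$, so $\eta(v)=\kappa(v)$ and the inequalities hold trivially (consistent with reading $\mathrm{vol}_j=1$ as the volume of the empty family). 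Neither point is a gap; your transposed row/column convention for $M_J(j)(A)$ is also harmless since determinants are transpose-invariant.
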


\begin{rmk} The functions $\nu$, $\kappa$, $\eta$ and $\tilde \eta$ are 
continuous.
In order to see these facts, it suffices to show that $\eta$ is continuous at 
$A\in \mathcal{M}_{p,p}(\R)$. 
It is obvious if the denominator is bigger than $0$. 
Let us assume that the dominator is equal to $0$. 
Then $\eta (A)=0$ and thus $ A\in\mathcal{S}$. 
Let $\{ A_{k} \}$ be a sequence of elements of $\mathcal{M}_{p,p}(\R)$ 
which tend to $ A$. 
Then there exists $C>0$ such that as $ k\rightarrow\infty$, we have
$$
\eta (A_{k})\leq C\nu(A_{k})=C d(A_{k}, \mathcal{S})\rightarrow 0.
$$
\end{rmk}

We have the following equivalent conditions to the relative Kuo condition.
		
\begin{thm}\label{equiv1} Let $\s$ be a (non empty) germ  at $0$ of a closed 
subset of $ \R^n.$ For $f\in {\mathcal E}_{[r]}(n,p)$, $n \ge p$, 
the following conditions are equivalent:

\begin{enumerate}[(1)]
\item $f$ satisfies condition $(K_{\s})$.
\item $f$ satisfies condition $(\widetilde{K}_{\s})$.
\item The inequality
\begin{equation*}
d(x,\s)
  \left(\frac{\Gamma( (\grad f_i(x))_{1\leq i\leq p})}
{\sum_{j=1}^p\Gamma ((\grad f_i(x))_{i\ne j })}\right)^{\frac{1}{2}}
+\|f(x)\|\succsim d(x,\s)^{r} 
\end{equation*}\label{koz1}
holds in some neighbourhood of $0 \in \R^n$,
where 
$$
\Gamma (v_1,\ldots,v_k) :=\det (<v_i, v_j>_{\{ i,j \in\{1,\ldots, k\}\}})
$$ 
is the Gram determinant.
\item The inequality
\begin{equation*}
d(x,\s)\|df^*(x)y\|
+\|f(x)\|\succsim d(x,\s)^{r} 
\end{equation*} 
holds for $x$ in some neighbourhood of $0 \in \R^n$, uniformly for all 
$y \in \mathbb{S}^{p-1},$ 
where $df^*(x)$ is the dual map of $df(x)$,
and $\mathbb{S}^{p-1}$ denotes the unit sphere in $\R^p$ centred at 
$0 \in \R^p.$
\end{enumerate}
\end{thm}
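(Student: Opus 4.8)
The plan is to treat the equivalence $(1)\Leftrightarrow(2)$ as already established---it is recorded immediately after the definition of $(\widetilde{K}_{\s})$ and reappears in Theorem \ref{RelativeKuoThm1}---and to prove the two genuinely new equivalences $(2)\Leftrightarrow(3)$ and $(2)\Leftrightarrow(4)$. The guiding observation is that the unfamiliar quantities appearing in (3) and (4) are simply the functions $\eta$ and $\nu$ evaluated at $df(x)$. Since $(\widetilde{K}_{\s})$ reads $d(x,\s)\,\kappa(df(x))+\|f(x)\|\succsim d(x,\s)^{r}$, and since $\eta\thickapprox\kappa$ by Lemma \ref{equiv2} while $\nu\thickapprox\kappa$ by the relation between Rabier's function and the Kuo distance, each of (3) and (4) will be exhibited as a $\thickapprox$-reformulation of the left-hand side of $(\widetilde{K}_{\s})$.

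First I would isolate the elementary bridge that makes these reformulations legitimate: if $F,G\ge0$ satisfy $F\thickapprox G$, then $d(x,\s)\,F+\|f\|\thickapprox d(x,\s)\,G+\|f\|$, whence one of these sums is $\succsim d(x,\s)^{r}$ precisely when the other is. Concretely, from $\eta\le\kappa\le\sqrt{p}\,\eta$ one multiplies by $d(x,\s)\ge0$ and adds $\|f(x)\|$ to obtain $d\eta+\|f\|\le d\kappa+\|f\|\le\sqrt{p}\,(d\eta+\|f\|)$, and likewise with $\nu$ in place of $\eta$; this is all the analysis the $\succsim$-manipulations require.

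For $(2)\Leftrightarrow(3)$ I would invoke the Cauchy--Binet formula. Letting $A(x)$ be the $n\times p$ matrix whose columns are $\grad f_1(x),\dots,\grad f_p(x)$ (so that $A$ is the matrix of $df^{*}(x)$), Cauchy--Binet gives $\Gamma((\grad f_i)_{1\le i\le p})=\det(A^{*}A)=\sum_I|M_I(A)|^{2}$, matching the numerator, and, for each fixed $j$, $\Gamma((\grad f_i)_{i\ne j})=\sum_J|M_J(j)(A)|^{2}$ for the matrix with its $j$-th column removed, so that summing over $j$ matches the denominator of Definition \ref{minor}. Thus the bracketed factor in (3) is exactly $\eta(df(x))$, condition (3) becomes $d(x,\s)\,\eta(df(x))+\|f(x)\|\succsim d(x,\s)^{r}$, and the bridge together with Lemma \ref{equiv2} identifies this with $(\widetilde{K}_{\s})$, i.e.\ with (2).

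For $(2)\Leftrightarrow(4)$ the crux is reading the clause ``uniformly for all $y\in\mathbb{S}^{p-1}$'' correctly. Demanding one constant $K$ and one neighbourhood for which $d(x,\s)^{r}\le K\big(d(x,\s)\,\|df^{*}(x)y\|+\|f(x)\|\big)$ holds for every $y$ is the same as demanding it at the minimising $y$; as $\mathbb{S}^{p-1}$ is compact and $y\mapsto\|df^{*}(x)y\|$ continuous, $\min_{y}\|df^{*}(x)y\|=\nu(df(x))$ is attained, so (4) is exactly $d(x,\s)\,\nu(df(x))+\|f(x)\|\succsim d(x,\s)^{r}$. The relation $\nu\thickapprox\kappa$ and the bridge then give (4)$\Leftrightarrow$(2). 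The two $\thickapprox$-reformulations are routine; the step demanding real care, and where I expect the main obstacle, is the Cauchy--Binet bookkeeping in $(2)\Leftrightarrow(3)$---keeping the roles of rows, columns and the deleted index $j$ straight when matching the Gram determinants against the minor sums of Definition \ref{minor}.
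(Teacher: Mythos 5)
Your proposal is correct and follows essentially the same route as the paper: the paper also takes $(1)\Leftrightarrow(2)$ as immediate from the definitions, proves $(2)\Leftrightarrow(3)$ by identifying the Gram-determinant ratio with $\eta(df(x))$ via the identity $\sum_I|M_I|^2=\Gamma((\grad f_i))$ (Cauchy--Binet, which the paper uses without naming) together with Lemma \ref{equiv2}, and proves $(2)\Leftrightarrow(4)$ by recognising that uniformity over $y\in\mathbb{S}^{p-1}$ amounts to the minimum $\min_{\|y\|=1}\|\sum_i y_i\grad f_i(x)\|$, which is exactly your $\nu(df(x))$ (the paper calls it $\tilde\kappa$), and then invoking $\nu\thickapprox\kappa$. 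Your explicit ``bridge'' lemma and the compactness remark only make precise steps the paper leaves implicit.
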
 

\begin{proof}
As mentioned above, it is easy to see the equivalence $(1) \iff (2)$.

The equivalence $(2) \iff (3)$ follows from Lemma \ref{equiv2} and 
$$
\D(f,x)  :=\sum_{1\leq i_1<\ldots<i_{p}\leq n} \left(\det {D (f_1,\ldots,f_p)\over D(x_{1_{1}},\ldots,x_{i_{p}})}\right)^2
=\Gamma ((\grad f_i(x))_{1\leq i\leq p}).
$$

Lastly we show the equivalence $(2) \iff (4)$. 
For a given set of vectors
$v_{1},v_{2},\dots,v_{p}\in\mathbb{R}^{n}$, let
$\tilde{\kappa}(v_{1},v_{2},\dots,v_{p})$ be defined as follows:
\begin{equation*}
\tilde{\kappa}(v_{1},v_{2},\dots,v_{p}) :=
\min\left\{ \left|\sum_{i=1}^{p}\lambda_{i}v_{i}\right| \ : \ \lambda_{i}\in\mathbb{R},\ \sum_{i=1}^{p}\lambda_{i}^{2}=1\right\}.
\end{equation*}
Then the equivalence between conditions (2) and (4) follows from the equivalence between $\kappa$ and $ \tilde{\kappa}$ (see  Lemma \ref{equiv2})  and the fact that
\begin{equation*}\|df^*(x)y\|=\|\sum_{i=1}^{p}y_{i}\grad f_{i}(x)\|\succsim d(x,\s)^{r-1} \  \text{  for all } y\in \mathbb{S}^{p-1}
\end{equation*}
is equivalent to
\begin{equation*} {\tilde\kappa}(\grad  f_{1}(x),\dots,  \grad f_{p}(x)) \succsim d(x,\s)^{r-1}.\end{equation*}
\end{proof} 

\subsection{Equivalent conditions to the second relative Kuo condition}
\label{equiv2ndrel}

For condition $(\widetilde{K}_{\s}^\d)$, we have the following 
equivalent conditions. 

\begin{prop}\label{equivdelta} 
Let $\s$ be a (non empty) germ  at $0$ of a closed subset of $ \R^n.$ For a map $f\in {\mathcal E}_{[r+1]}(n,p)$, $n \ge p$, the following conditions are equivalent:  
\begin{enumerate}[(1)]
\item $f$ satisfies condition $(\widetilde{K}_{\s}^\d)$.
\item For any  map $g\in {\mathcal E}_{[r+1]}(n,p)$ 
satisfying $j^{r}g(\s;0)=j^{r}f(\s;0)$, 
 there exists 
$\delta>0$  (depending on $g$), such that the inequality
\begin{equation*}
d(x,\s)
  \left(\frac{\Gamma( (\grad g_i(x))_{1\leq i\leq p})}
{\sum_{j=1}^p\Gamma ((\grad g_i(x))_{i\ne j })}\right)^{\frac{1}{2}}
+\|g(x)\| \succsim d(x,\s)^{r+1-\d} 
\end{equation*} 
holds in some neighbourhood of $0 \in \mathbb{R}^n.$
\item For any  map $g\in {\mathcal E}_{[r+1]}(n,p)$ 
satisfying $j^{r}g(\s;0)=j^{r}f(\s;0)$, 
there exists 
$\delta>0$  (depending on $g$), such that the inequality
\begin{equation*}
d(x,\s)\|dg^*(x)y\|
+\|g(x)\|\succsim d(x,\s)^{r+1-\d} 
\end{equation*} 
holds for some $\d>0$, for $x$ in some neighbourhood of $0 \in \R^n$ and uniformly for all $y\in \mathbb{S}^{p-1}.$ 

\item For any  map $g\in {\mathcal E}_{[r+1]}(n,p)$ 
satisfying $j^{r}g(\s;0)=j^{r}f(\s;0)$, there exists 
$\delta>0$  (depending on $g$), such that the inequality
\begin{equation}\label{koz2}
d(x,\s)\|df^*(x)y\|
+\|g(x)\|\succsim d(x,\s)^{r+1-\d} 
\end{equation} 
holds in a neighbourhood of the origin, (uniformly) for all 
$y\in \mathbb{S}^{p-1}.$
\end{enumerate}
\end{prop}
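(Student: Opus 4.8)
The plan is to funnel all four conditions through two "master" inequalities phrased with Rabier's function $\nu$, working for each fixed admissible $g$ (the outer quantifier "for any $g$ with $j^rg(\s;0)=j^rf(\s;0)$" is identical in (1)--(4), so it suffices to prove the equivalence $g$ by $g$). First I would record the pointwise identity
\begin{equation*}
\nu(df(x))=\inf_{y\in\mathbb{S}^{p-1}}\|df^*(x)y\|=\tilde\kappa(\grad f_1(x),\ldots,\grad f_p(x)),
\end{equation*}
immediate from the definition of $\nu$ together with $df^*(x)y=\sum_i y_i\grad f_i(x)$, and likewise for $g$. Using this, together with $\nu\thickapprox\kappa$ (Rabier's fact (3)), $\eta\thickapprox\kappa$ (Lemma \ref{equiv2}), and the Gram identity $\D(g,x)=\Gamma((\grad g_i)_{1\le i\le p})$ already exploited in the proof of Theorem \ref{equiv1}, I would show that conditions (2) and (3) are each equivalent, for the given $g$ and $\delta$, to
\begin{equation*}
(\star_g)\colon\qquad d(x,\s)\,\nu(dg(x))+\|g(x)\|\succsim d(x,\s)^{r+1-\d},
\end{equation*}
while condition (4) and the defining inequality of $(\widetilde{K}_{\s}^\d)$ (condition (1)) are each equivalent to the same inequality with $dg$ replaced by $df$, namely
\begin{equation*}
(\star_f)\colon\qquad d(x,\s)\,\nu(df(x))+\|g(x)\|\succsim d(x,\s)^{r+1-\d}.
\end{equation*}

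These reductions are the $\delta$-graded, $g$-tracking analogues of the argument in Theorem \ref{equiv1}, carried out pointwise in $x$ so that the exponent $r+1-\delta$ plays no active role. For (3) and (4) the passage between the ``uniformly for all $y\in\mathbb{S}^{p-1}$'' formulation and the $\nu$-formulation is obtained by taking the infimum over $y$: since $\|g(x)\|$ is independent of $y$, one has $\inf_{y}\bigl(d(x,\s)\|dg^*(x)y\|+\|g(x)\|\bigr)=d(x,\s)\,\nu(dg(x))+\|g(x)\|$, and the uniform lower bound over all $y$ is equivalent to the lower bound for this infimum. For (2) the bracketed expression is exactly $\eta(dg(x))$ by Definition \ref{minor} and the Gram identity, and $\eta\thickapprox\kappa\thickapprox\nu$ absorbs the bounded multiplicative constants into the implied constant of $\succsim$.

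The decisive step is the link $(\star_f)\Longleftrightarrow(\star_g)$, which after the reduction $\kappa\thickapprox\nu$ is precisely the content of Remark \ref{remark211}(2); I would establish it by a flatness estimate on $d(f-g)$, and this is where $j^rg(\s;0)=j^rf(\s;0)$ enters. Writing $h=f-g\in\mathcal{E}_{[r+1]}(n,p)$, the hypothesis gives $j^rh(\s;0)=\{0\}$, so each partial derivative $\partial h/\partial x_k$ is a $C^{r}=C^{(r-1)+1}$ map whose $(r-1)$-jet vanishes on $\s$; Lemma \ref{lemrflat} (applied with $r-1$ in place of $r$, the case $r=1$ being the Lipschitz bound for a $C^1$ map vanishing on $\s$) yields $\|d(f-g)(x)\|\precsim d(x,\s)^{r}$. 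Because $\nu$ is $1$-Lipschitz for the operator norm (Rabier's facts (1a) and (2)), this gives $|\nu(df(x))-\nu(dg(x))|\le\|d(f-g)(x)\|\precsim d(x,\s)^{r}$, whence $|d(x,\s)\nu(df(x))-d(x,\s)\nu(dg(x))|\precsim d(x,\s)^{r+1}$.

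Since $d(x,\s)^{r+1}=d(x,\s)^{\d}\cdot d(x,\s)^{r+1-\d}=\smallo(d(x,\s)^{r+1-\d})$ for every $\delta>0$, this error is absorbed into the right-hand side near $0$ (shrinking the neighbourhood and halving the constant), so $(\star_f)$ and $(\star_g)$ hold simultaneously with the same $\delta$. Chaining the equivalences gives $(1)\Leftrightarrow(\star_f)\Leftrightarrow(\star_g)\Leftrightarrow(2)$ together with $(\star_g)\Leftrightarrow(3)$ and $(\star_f)\Leftrightarrow(4)$, which is the assertion. I expect the only genuine obstacle to be the derivative-flatness estimate, plus checking that the absorption is uniform in $y$; the latter causes no trouble precisely because the bound $\|d(f-g)(x)\|\precsim d(x,\s)^r$ is independent of $y$.
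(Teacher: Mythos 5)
Your proposal is correct and takes essentially the same route as the paper: the paper dispatches $(1)\iff(2)\iff(3)$ by the pointwise identities of Theorem \ref{equiv1} (Gram/Cauchy--Binet, $\eta\thickapprox\kappa\thickapprox\nu$, and the dual-map identity $\|dg^*(x)y\|=\|\sum_i y_i\grad g_i(x)\|$), and bridges the $df$-versions and $dg$-versions of the inequality by Lemma \ref{lemrflat}, exactly your $(\star_f)\iff(\star_g)$ step. Your write-up simply makes explicit what the paper leaves implicit --- that Lemma \ref{lemrflat} must be applied to the partial derivatives of $f-g$ (with $r-1$ in place of $r$, and the Lipschitz bound when $r=1$) to get $\|d(f-g)(x)\|\precsim d(x,\s)^r$, after which the error $\precsim d(x,\s)^{r+1}$ is absorbed into $d(x,\s)^{r+1-\d}$.
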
 

\begin{proof}
The proofs of the equivalences between (1) (2),  and (3) are identical 
to the corresponding ones in Theorem \ref{equiv1}.

The equivalence between $(3)$ and $(4)$ is an application of 
Lemma\ \ref{lemrflat}.
\end{proof} 

In the next theorem, we establish the equivalence between the second 
relative Kuo condition ($K_{\s}^\delta$) and condition
$(\widetilde{K}_{\s}^\de)$  for subanalytic maps and $\s.$

\begin{thm}\label{prop223} 
Let $\s$ be a germ of a closed  subanalytic set at $0 \in \R^n$,
such that $0$ is an accumulation point of $ \s$ and $\mathbb{R}^n\setminus \s.$\\
For a subanalytic map $f\in {\mathcal E}_{[r+1]}(n,p)$, $n \ge p$, 
the following conditions are equivalent:  
\begin{enumerate}[(1)]
\item $f$ satisfies the second relative Kuo condition $(K_{\s}^\d):$ 
for any subanalytic map $g\in {\mathcal E}_{[r+1]}(n,p)$ 
with $j^{r}g(\s;0)=j^{r}f(\s;0)$, there are strictly positive numbers
$C, \alpha,\delta$ and $ \bar w$ (depending on $g$) such
that
\begin{equation}\label{ksd}
\kappa(df(x))\geq Cd(x,\s)^{r-\delta} \text{ in } \mathcal{H}^{\s}_{r+1}(g;\bar w)\cap\{\|x\|<\a\}.
\end{equation}

\item For any subanalytic
map $g\in {\mathcal E}_{[r+1]}(n,p)$ 
with $j^{r}g(\s;0)=j^{r}f(\s;0)$,
\begin{equation}\label{itksd}
 \frac{d(x,\s)\kappa(df(x))
+\|g(x)\|}{d(x,\s)^{r+1} }\to \infty\quad \text{ as } d(x, \s)\to 0,\, d(x, \s)\not=0.
\end{equation} 

\item $f$ satisfies condition $(\widetilde{K}_{\s}^\de):$
for any subanalytic
map $g\in {\mathcal E}_{[r+1]}(n,p)$ 
with $j^{r}g(\s;0)=j^{r}f(\s;0)$, there exists 
$\delta>0$  (depending on $g$) such that
\begin{equation}\label{tksd}
d(x,\s)\kappa(df(x))+\|g(x)\|\succsim d(x,\s)^{r+1-\d} 
\end{equation} 
holds in some neighbourhood of $0 \in \R^n.$

\end{enumerate}
\end{thm}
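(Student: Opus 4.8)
The plan is to establish the three conditions as a cycle of implications $(3)\Rightarrow(1)\Rightarrow(2)\Rightarrow(3)$, with the divergence condition (2) playing the role of a bridge: it is a uniform ``linear'' reformulation that, on the one hand, is directly comparable with the horn-neighbourhood estimate (1), and, on the other hand, can be upgraded to the power estimate (3) by subanalyticity. Throughout I fix a subanalytic $g$ with $j^{r}g(\s;0)=j^{r}f(\s;0)$ and work with the single subanalytic function $A_{g}(x):=d(x,\s)\,\kappa(df(x))+\|g(x)\|$. The hypothesis that $0$ is an accumulation point of both $\s$ and $\R^{n}\setminus\s$ guarantees that the statements are non-vacuous and that the subanalytic curve selection lemma will produce arcs in $\R^{n}\setminus\s$ tending to $0$ along which $d(\cdot,\s)\to 0$.

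For $(3)\Rightarrow(1)$ no subanalyticity is needed. Given $\delta>0$ and $c>0$ with $A_{g}\ge c\,d(x,\s)^{r+1-\delta}$, and any width $\bar w>0$, on $\mathcal{H}^{\s}_{r+1}(g;\bar w)$ we have $\|g(x)\|\le \bar w\,d(x,\s)^{r+1}$, which is of strictly higher order than $d(x,\s)^{r+1-\delta}$ and hence negligible; so $d(x,\s)\,\kappa(df(x))\ge \tfrac{c}{2}\,d(x,\s)^{r+1-\delta}$ near $0$, i.e. $\kappa(df(x))\ge C\,d(x,\s)^{r-\delta}$ on that horn, which is $(K_{\s}^{\d})$. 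The implication $(2)\Rightarrow(3)$ is where subanalyticity enters: $A_{g}$ and $d(\cdot,\s)$ are subanalytic and (2) asserts $A_{g}/d(\cdot,\s)^{r+1}\to\infty$. Along every subanalytic arc approaching $\s$ both functions admit Puiseux expansions with rational exponents, so the order of $A_{g}$ in $d(\cdot,\s)$ is rational and strictly smaller than $r+1$; the finiteness of these exponents (the Lojasiewicz inequality for subanalytic functions) bounds this order away from $r+1$, yielding a single $\delta>0$ with $A_{g}\succsim d(\cdot,\s)^{r+1-\delta}$, which is $(\widetilde{K}_{\s}^{\d})$.

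The substantive step is $(1)\Rightarrow(2)$. Suppose (2) fails for some $g$; by curve selection there is a subanalytic arc $\gamma(t)\to 0$ in $\R^{n}\setminus\s$ with $d(\gamma(t),\s)\to 0$ along which $A_{g}/d(\cdot,\s)^{r+1}$ stays bounded. This forces both $\|g(\gamma(t))\|\precsim d(\gamma(t),\s)^{r+1}$ and $\kappa(df(\gamma(t)))\precsim d(\gamma(t),\s)^{r}$, and Lemma \ref{lemrflat} (applied to the $r$-flat germ $f-g$) further gives $\|f(\gamma(t))\|\precsim d(\gamma(t),\s)^{r+1}$. The obstacle is that $\gamma$ need not lie in the particular horn $\mathcal{H}^{\s}_{r+1}(g;\bar w)$ furnished by $(K_{\s}^{\d})$; it sits only in a horn of some larger width. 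To circumvent this I would exploit that $(K_{\s}^{\d})$ is required for \emph{every} map in the relative jet class: cancelling the order-$(r+1)$ part of $f$ along $\gamma$ by an $r$-flat subanalytic perturbation (an $h$ with $j^{r}h(\s;0)=0$ obtained by a subanalytic extension of $-f|_{\gamma}$ to a genuine germ), I would produce $g'=f+h$ with $j^{r}g'(\s;0)=j^{r}f(\s;0)$ and $\|g'(\gamma(t))\|=\smallo(d(\gamma(t),\s)^{r+1})$. Then $\gamma$ eventually lies in \emph{every} horn of $g'$, so $(K_{\s}^{\d})$ applied to $g'$ yields $\kappa(df(\gamma(t)))\succsim d(\gamma(t),\s)^{r-\delta'}$, contradicting $\kappa(df(\gamma(t)))\precsim d(\gamma(t),\s)^{r}$.

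I expect the construction of the perturbation $g'$ to be the main obstacle: one must realise the prescribed order-$(r+1)$ values $-f(\gamma(t))$ by an honest $C^{r+1}$ subanalytic germ that is $r$-flat on $\s$, controlling simultaneously its behaviour on the shrinking tube about $\gamma$ and its $r$-jet along $\s$. Once this is available, the rest is routine: comparing $\kappa$ with its variants $\eta$ and $\tilde\kappa$ via Lemma \ref{equiv2} and absorbing constants, after which the full equivalence $(1)\Leftrightarrow(2)\Leftrightarrow(3)$ follows by chaining the three implications around the cycle.
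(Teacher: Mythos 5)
Your cycle of implications is the same as the paper's, and two of its three edges match the paper's proof essentially verbatim: $(3)\Rightarrow(1)$ by absorbing $\|g(x)\|\le\bar w\,d(x,\s)^{r+1}$ on the horn (the paper takes $\bar w=C/2$), and $(2)\Rightarrow(3)$ by the Lojasiewicz inequality. On the latter, one caution: bounding Puiseux exponents arc by arc does not by itself yield a single $\delta$ valid in a whole neighbourhood --- the uniformity is exactly what Lojasiewicz provides when applied to the single subanalytic function $h(x)=d(x,\s)^{r+1}\bigl(d(x,\s)\kappa(df(x))+\|g(x)\|\bigr)^{-1}$, extended by $0$ on $\s$, which is how the paper argues; since you do cite the inequality, I read this as loose wording rather than an error.

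The genuine gap is in $(1)\Rightarrow(2)$: everything there hinges on the perturbed realisation $g'=f+h$, and $h$ is never constructed --- you yourself flag it as the main obstacle, so the proposal is incomplete precisely at its load-bearing step. The requirements on $h$ are in real tension with one another: it must be $C^{r+1}$ at $0$, subanalytic, satisfy $j^rh(\s;0)=0$, and have norm of exact order $d(\gamma(t),\s)^{r+1}$ along the arc (the case $\|f(\gamma(t))\|=o(d(\gamma(t),\s)^{r+1})$ needs no perturbation at all, since then $\gamma$ eventually lies in every horn of $f$ itself, and $(1)$ applied to $g'=f$ finishes). A bump carried by a tube of radius $O(d(\gamma(t),\s))$ about $\gamma$ meets the flatness and subanalyticity requirements but can never be $C^{r+1}$: since $h$ vanishes off the tube while $\|h\|\thickapprox d^{r+1}$ on its core, a Taylor estimate across the tube forces $\sup\|D^{r+1}h\|$ to stay bounded away from $0$ near the origin, so $D^{r+1}h$ cannot be continuous (and zero) at $0$; widening the tube to cure this lets its closure meet $\s$ away from $0$, threatening the flatness condition. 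So the plan needs a substantially cleverer extension than ``a subanalytic extension of $-f|_{\gamma}$,'' and until one is supplied the implication is unproven. For comparison: the paper builds no perturbation here. It applies the failure of \eqref{itksd} to the \emph{same} $g$, extracts the arc by curve selection, and asserts that the points $\gamma(t_i)$ violate \eqref{ksd} in $\mathcal{H}^{\s}_{r+1}(g;\bar w)$ for \emph{every} $\bar w>0$ --- an inference that is immediate only when $\|g(\gamma(t))\|=o(d(\gamma(t),\s)^{r+1})$, i.e.\ it passes silently over exactly the case $\|g(\gamma(t))\|\thickapprox d(\gamma(t),\s)^{r+1}$ that your perturbation is designed to handle. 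You have thus spotted a real subtlety that the paper's own proof glosses over; but your remedy, as it stands, is a declared intention rather than an argument.
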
 

\begin{proof}
We first show the implication $(1)\implies (2).$
If condition $(2)$ does not hold, then there exist a realisation $g$ of 
$j^{r}f(\s;0)$, and an analytic arc $\gamma:I\to \mathbb{R}^n$ 
where $I = [0, \beta )$, $\beta > 0$, such that 
$\gamma (0) = 0 \in \R^n$ and for $t\in I$
\begin{equation}\label{Eq42}
    \|g(\gamma(t))\| \precsim d(\gamma(t),\s)^{r+1},\quad
    \kappa(df(\gamma(t)))\precsim  d(\gamma(t),\s)^{r}.
\end{equation}
In particular, for any sequence $\{x_{i}\}$ where
$x_{i}=\gamma(t_{i})$, $t_{i}\to0$, $t_{i}\neq0$, and for any  $\de>0$ we have 
\begin{equation}\label{Eq50}
    \|g(x_{i})\|=o( d(x_{i},\s)^{r+1-\d}),\quad
    \kappa(df(x_{i}))=o( d(x_{i},\s)^{r-\d}).
\end{equation}
Then (\ref{Eq50}) implies that for any choice of the positive numbers $C, \alpha,\delta$ and $ \bar w$, 
the inequality
\begin{equation*}
\kappa(df(x))\geq Cd(x,\s)^{r-\delta} 
\end{equation*} 
cannot hold in $\mathcal{H}^{\s}_{r+1}(g;\bar w)\cap\{\|x\|<\a\}$, namely 
condition $(K_{\s}^\d )$ is not satisfied.
Therefore the implication $(1) \implies (2)$ is shown.

We next show the implication $(2)\implies (3).$
By condition $(2),$ the  continuous subanalytic function germ on 
$(\mathbb{R}^n\setminus \s,0)$, defined by
\begin{equation*} 
\dis h(x):= \frac{d(x,\s)^{r+1}}{ d(x,\s)\kappa(df(x))+\|g(x)\|},
\end{equation*} 
can be extended continuously by $0$ on $ \s.$
Since $ \s= h^{-1}(0) $,  by the Lojasiewicz inequality 
(\cite{lojasiewicz} \S 18), there is some $\d>0$ such that
$$
0\le h(x)\precsim d(x, \s)^\d
$$
in some neighbourhood of $0\in \mathbb{R}^n.$
Thus  $(\widetilde{K}_{\s}^\de)$ is satisfied.

We lastly show the implication $(3)\implies (1).$
Suppose that $f$ satisfies condition ($\widetilde{K}_{\s}^\d$).
Let $g \in {\mathcal E}_{[r+1]}(n,p)$ with 
$j^{r}g(\s;0)=j^{r}f(\s;0)$ which satisfies the condition that
there are positive constants $ \delta,C$ and  $\alpha$ such that
\begin{equation}\label{equ2.40}
d(x,\s)\kappa(df(x))
+\|g(x)\|\ge Cd(x,\s)^{r+1-\delta} 
\end{equation}
for $x\in\mathbb{R}^{n}$, $\|x\|<\alpha.$
If $x$ is in  the horn-neighbourhood 
$\mathcal{H}^{\s}_{r+1}(g;\frac{C}{2})\cap \{\|x\|<\alpha\},$ then
$$\|g(x)\|\leq\frac{C}{2}d(x,\s)^{r+1}\le\frac{C}{2}d(x,\s)^{r+1-\delta}
$$
and by \eqref{equ2.40}
$\dis
\kappa(df(x))\ge
\frac{C}{2}d(x,\s)^{r-\delta}.
$
Therefore condition $(K_{\s}^\d)$ is satisfied;
which shows the implication $(3) \implies (1)$.
\end{proof} 

\begin{rmk}\label{nonsubanalytic}
From the proof, we can see that without the assumption of subanalyticity,
the implication $(3) \implies (1)$ in Theorem \ref{prop223} holds, 
namely condition $(\widetilde{K}_{\s}^\de)$ implies the second relative 
Kuo condition $(K_{\s}^\d).$ 
\end{rmk}

Let us now introduce  a (ostensibly) weaker condition in terms of  $f$ only 
(namely,  not using all the realisations of the jet  $j^rf(\s;0)$), to be 
compared to V. Kozyakin \cite{kozyakin}.

\begin{defn} A map germ $f\in {\mathcal E}_{[r+1]}(n,p)$, $n\geq p$, satisfies 
condition {\it  (${KZ}_{\s}$)}
 if
\begin{equation}
 \frac{d(x,\s)\|df^*(x)y\|
+\|f(x)\|}{d(x,\s)^{r+1} }\to \infty\quad \text{ as } d(x,\s)\to 0,\, d(x,\s)\not=0
\end{equation} 
in a neighbourhood of the origin, (uniformly) for all 
$y \in \mathbb{S}^{p-1}$. 
\end{defn} 

We have another equivalent condition to the second relative Kuo condition. 

\begin{thm}\label{prop226}
Let $\s$ be a germ at $0$ of a closed subanalytic subset of $ \mathbb{R}^n,$ 
and let $f\in {\mathcal E}_{[r+1]}(n,p),$ $n\geq p,$ be a subanalytic map.
Then f satisfies the second relative Kuo condition, equivalently 
for any subanalytic map $g\in {\mathcal E}_{[r+1]}(n,p)$ 
such that $j^{r}g(\s;0)=j^{r}f(\s;0)$, there are positive constants
$\delta,C$ and  $\alpha$ such that
\begin{equation}\label{equ2.4}
d(x,\s)\kappa(dg(x))
+\|g(x)\|\ge Cd(x,\s)^{r+1-\delta},
\end{equation}
for $\| x\| < \alpha$
if and only if $f$ satisfies condition (${KZ}_{\s}$).
\end{thm}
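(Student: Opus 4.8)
The plan is to pass everything through condition (2) of Theorem \ref{prop223}, which for subanalytic data is already equivalent to the second relative Kuo condition $(K_\s^\d)$. For a subanalytic realisation $g$ with $j^{r}g(\s;0)=j^{r}f(\s;0)$, abbreviate by $(\star_g)$ the property
\begin{equation*}
\frac{d(x,\s)\kappa(df(x))+\|g(x)\|}{d(x,\s)^{r+1}}\to\infty\quad\text{as } d(x,\s)\to 0,\ d(x,\s)\neq 0 .
\end{equation*}
By Theorem \ref{prop223}, $f$ satisfies $(K_\s^\d)$ if and only if $(\star_g)$ holds for \emph{every} such $g$. The crux of the statement is therefore that condition $(KZ_\s)$, which refers to $f$ alone, is precisely $(\star_g)$ for the \emph{single} realisation $g=f$, and that this one instance already forces all the others.

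First I would identify $(KZ_\s)$ with $(\star_f)$. In $(KZ_\s)$ the term $\|df^*(x)y\|=\|\sum_{i}y_i\grad f_i(x)\|$ depends on $y$ while $\|f(x)\|$ does not, so the requirement that the ratio tend to $\infty$ uniformly over the compact sphere $\mathbb{S}^{p-1}$ is exactly the requirement for the worst (smallest) value of $\|df^*(x)y\|$, that is for $\inf_{\|y\|=1}\|df^*(x)y\|=\nu(df(x))$. Thus $(KZ_\s)$ says that $\bigl(d(x,\s)\nu(df(x))+\|f(x)\|\bigr)/d(x,\s)^{r+1}\to\infty$, and by the equivalence $\nu\thickapprox\kappa$ (the facts recorded for Rabier's function, cf.\ Lemma \ref{equiv2}) this is exactly $(\star_f)$.

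The implication $(K_\s^\d)\Rightarrow(KZ_\s)$ is then immediate, since $f$ is itself a subanalytic realisation of its own relative jet: Theorem \ref{prop223} gives $(\star_g)$ for every $g$, in particular $(\star_f)=(KZ_\s)$. For the converse I would deduce $(\star_g)$ for an arbitrary subanalytic realisation $g$ from $(\star_f)$ alone. Because $j^{r}(g-f)(\s;0)=\{0\}$ and $g-f$ is of class $C^{r+1}$, Lemma \ref{lemrflat} gives $\|g(x)-f(x)\|\precsim d(x,\s)^{r+1}$, say $\|g(x)-f(x)\|\le K\,d(x,\s)^{r+1}$ near $0$. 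Hence $\|g(x)\|\ge\|f(x)\|-K\,d(x,\s)^{r+1}$ and
\begin{equation*}
\frac{d(x,\s)\kappa(df(x))+\|g(x)\|}{d(x,\s)^{r+1}}\ \ge\ \frac{d(x,\s)\kappa(df(x))+\|f(x)\|}{d(x,\s)^{r+1}}-K ,
\end{equation*}
whose right-hand side tends to $\infty$ by $(\star_f)$; thus $(\star_g)$ holds, and Theorem \ref{prop223} yields $(K_\s^\d)$.

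The only delicate point is the first step — checking that the uniform-in-$y$ blow-up of $(KZ_\s)$ coincides with the infimum over $y$ and hence with $\kappa(df)$ up to $\thickapprox$. After that, the flatness estimate of Lemma \ref{lemrflat} carries all the weight: it shows that replacing the single realisation $f$ by an arbitrary $g$ perturbs the relevant ratio by only the bounded amount $K$, which is negligible against a quantity tending to $\infty$. Subanalyticity enters solely through the validity of Theorem \ref{prop223}.
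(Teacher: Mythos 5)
Your proof is correct, and it rests on the same two pillars as the paper's — the flatness estimate of Lemma \ref{lemrflat} and a Lojasiewicz-type argument — but it routes them through different prior results. The paper proves the forward direction via Proposition \ref{equivdelta} (condition (\ref{equ2.4}) is equivalent to its condition (4), which with $g=f$ yields $(KZ_\s)$), and for the converse it keeps the sphere variable throughout: after the same estimate $\|g-f\|\precsim d(x,\s)^{r+1}$ it applies the Lojasiewicz inequality to $q(x,y)=d(x,\s)^{r+1}/\bigl(d(x,\s)\|df^*(x)y\|+\|g(x)\|\bigr)$ on $(\mathbb{R}^n\setminus\s)\times\mathbb{S}^{p-1}$, landing directly in condition (4) of Proposition \ref{equivdelta}. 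You instead collapse the sphere variable at the outset: the observation that uniform blow-up in $y$ is blow-up of the infimum $\nu(df(x))=\inf_{\|y\|=1}\|df^*(x)y\|$, combined with $\nu\thickapprox\kappa$, identifies $(KZ_\s)$ with condition (2) of Theorem \ref{prop223} for the single realisation $g=f$; the perturbation argument then upgrades this to condition (2) for every subanalytic realisation, and the Lojasiewicz step is delegated to Theorem \ref{prop223} (its implication $(2)\Rightarrow(3)$), where it is carried out on $\mathbb{R}^n\setminus\s$ without the sphere factor. Your route avoids duplicating the Lojasiewicz argument on a product space and makes transparent exactly why one realisation controls all the others; the paper's route is self-contained modulo Proposition \ref{equivdelta} and handles the $y$-uniformity head-on. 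Two small caveats: by invoking Theorem \ref{prop223} you formally inherit its standing hypothesis that $0$ is an accumulation point of both $\s$ and $\mathbb{R}^n\setminus\s$, which Theorem \ref{prop226} does not state (though the word ``equivalently'' in the statement already presupposes Theorem \ref{prop223}, so this is shared with the paper rather than a defect of your argument); and the relation $\nu\thickapprox\kappa$ you need is fact (3) in the list of properties of Rabier's function, not Lemma \ref{equiv2}, which concerns $\eta\thickapprox\kappa$.
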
 

\begin{proof} 
By Theorem \ref{equivdelta}, condition (\ref{equ2.4}) is equivalent to condition  $(4)$,
which implies (${KZ}_{\s}$) for any $g\in {\mathcal E}_{[r+1]}(n,p)$ such that $j^{r}g(\s;0)=j^{r}f(\s;0)$, in particular, for $f.$

To prove the converse, we will use the subanalyticity. 
Suppose that condition (${KZ}_{\s}$) is satisfied. 
Let $g\in {\mathcal E}_{[r+1]}(n,p)$ such that $j^{r}g(\s;0)=j^{r}f(\s;0),$
and set $h(x) :=g(x)-f(x)$. 
By Lemma\ \ref{lemrflat}, $\|h(x)\|\precsim
d(x,\s)^{r+1}$ for sufficiently small values of $\|x\|.$
Then, for all $y\in \mathbb{S}^{p-1}$
$$
\dis d(x,\s)\|df^{*}(x)y\|+ \|g(x)\|=d(x,\s)\|df^{*}(x)y\|+ \|f(x)+h(x)\|,
$$
and
$$
\dis\frac{d(x,\s)\|df^{*}(x)y\|+ \|g(x)\|}{d(x,\s)^{r+1}}\ge
\frac{d(x,\s)\|df^{*}(x)y\|+ \|f(x)\|}{d(x,\s)^{r+1}}-
\frac{\|h(x)\|}{d(x,\s)^{r+1}}.
$$
Since $\dis\frac{\|h(x)\|}{d(x,\s)^{r+1}}$ is bounded,
$
\dis\lim_{d(x,\s)\to 0} \frac{d(x,\s)\|df^{*}(x)y\|+ \|f(x)\|}{d(x,\s)^{r+1}}=\infty
$
which implies
$
\dis\lim_{d(x,\s)\to 0}\frac{d(x,\s)\|df^{*}(x)y\|+ 
\|g(x)\|}{d(x,\s)^{r+1}}=\infty .
$
Therefore condition (\ref{koz2}) is satisfied.

Now the  continuous subanalytic function germ on 
$(\mathbb{R}^n\setminus \s,0)\times \mathbb{S}^{p-1}$, defined by
\begin{equation*} 
\dis q(x,y):= \frac{d(x,\s)^{r+1}}{ d(x,\s)\|df^*(x)y\|+\|g(x)\|},
\end{equation*} 
can be extended continuously  by $0$ to $ \s\times \mathbb{S}^{p-1}.$
Since $ \s\times \mathbb{S}^{p-1}= q^{-1}(0) $, by the Lojasiewicz inequality, 
there is some $\d>0$ such that
$$
|q(x,y)|\precsim d((x,y), \s\times \mathbb{S}^{p-1})^\d=d(x, \s)^\d
$$
in some neighbourhood of $0\in \mathbb{R}^n.$
This is exactly condition $(4)$ in Theorem \ref{equivdelta}.
\end{proof}


\bigskip


\end{document}